\newtheorem{theorem}{Theorem}
\newtheorem{lemma}{Lemma}
\newtheorem{definition}{Definition}  
\newtheorem{corollary}{Corollary}
\newtheorem{proposition}{Proposition}
\newtheorem{claim}{Claim}
\begin{document}

\begin{center}
{\Large {\bf On a Problem in Diophantine Approximation}}\\
\vspace{2mm}
{\bf Evgeni Dimitrov, Yakov Sinai }
\end{center}

\vspace{2mm}

\begin{center}
{\bf Abstract}\\
\end{center}
We prove new results, related to the Littlewood and Mixed Littlewood conjectures in Diophantine approximation.

\section{Introduction}

A classical result by Dirichlet states that, for any real number $\alpha$ there exist infinitely many positive integers $q$ such that 
\begin{equation}\label{Dirichlet}
q \cdot ||q\alpha|| < 1,
\end{equation}
where $|| \cdot ||$ denotes the distance to the nearest integer. A consequence of Hurwitz's theorem is that the right-hand side of (\ref{Dirichlet}) cannot be improved by an arbitrary positive constant. Specifically, for $\epsilon < 1/\sqrt{5}$  there exist real numbers $\alpha$, for which the inequality $||q\alpha|| < \epsilon q^{-1}$ has at most finitely many solutions. The numbers $\alpha$ are called {\em badly approximable numbers} and we denote the set of all such numbers by {\bf Bad}; that is,
$$ {\bf Bad} = \{ \alpha \in \mathbb{R} : \exists c(\alpha) > 0 \mbox{ so that } ||q\alpha|| > c(\alpha)q^{-1} \mbox{ for all } q \in \mathbb{N} \}.$$
It is well-known that a real number lies in {\bf Bad} if, and only if, it has bounded partial coefficients in its continued fraction expansion. We also recall that {\bf Bad} has Lebesgue measure zero and full Hausdorff dimension \cite{Jarnik28}.\\

Dirichlet's result coupled with the elementary fact that $||x|| < 1$ for any real $x$, implies that for any given pair $(\alpha, \beta)$ there exist infinitely many positive integers $q$, which satisfy
$$ q \cdot ||q\alpha|| \cdot ||q\beta|| < 1.$$
A famous open problem in simultaneous Diophantine approximation, called Littlewood's conjecture \cite{Littlewood68}, asserts that in fact for any given pair $(\alpha, \beta)$ of real numbers the following stronger result holds
\begin{equation}\label{Littlewood}
\inf_{q\geq 1} q \cdot ||q\alpha|| \cdot ||q\beta || = 0.
\end{equation}
A direct application of (\ref{Dirichlet}) shows that (\ref{Littlewood}) is satisfied whenever $1$, $\alpha$ and $\beta$ are linearly dependent over $\mathbb{Q}$. Moreover the conjecture is trivially satisfied whenever $\alpha \not\in {\bf Bad}$ or $\beta \not\in {\bf Bad}$. Consequently, the only case of interest for the conjecture is when $\alpha$ and $\beta$ are both badly approximable.\\

There have been many contributions towards resolving Littlewood's conjecture, of which we will presently mention only two. In \cite{Velani00} Pollington and Velani established for each $\alpha \in {\bf Bad}$ the existence of a thick subset $G(\alpha) \subset {\bf Bad}$, such that $(\alpha, \beta)$ satisfy $(\ref{Littlewood})$ for each $\beta \in G(\alpha)$. The proof of this results rests on the combination of harmonic analysis and tools from metric number theory, as well as the use of the {\em Kaufman measure}, constructed in \cite{Kaufman80}. More recently, Einsiedler, Katok and Lindenstrauss, proved the remarkable result that the set of pairs $(\alpha, \beta)$, for which (\ref{Littlewood}) does not hold has Hausdorff dimension zero  \cite{EKL06}. The authors actually established a partial result toward a conjecture by Margulis on ergodic actions on the homogeneous space $SL_k(\mathbb{R})/SL_k(\mathbb{Z})$ for $k \geq 3$. See \cite{Venkatesh08} for a more informal discussion of the work of Einsiedler, Katok and Lindenstrauss and \cite{Moshchevitin12} for a survey on Littlewood's conjecture and related problems.\\

A simple, yet natural, approach to consider for proving Littlewood's conjecture is to restrict one's attention to subsequences of the positive integers formed by the denominators of the $n$th convergents of $\alpha$ or $\beta$. In particular, let $\alpha = [a_0; a_1, a_2, a_3, \cdots]$ represent the regular continued fraction expansion of $\alpha$, and $p_n(\alpha)/q_n(\alpha) = [a_0; a_1,\cdots,a_n]$ denote its $n$th convergent. Then using the fact that the convergents are the best approximates (i.e. $||q\alpha|| \geq ||q_n(\alpha)\alpha||$ for any $q \leq q_n(\alpha)$) one readily has that $q_n(\alpha)||q_n(\alpha) \alpha|| < 1$. Then intuitively one would hope that the quantity $||q_n(\alpha)\beta||$ would be small for some $n$, so that $(\alpha, \beta)$ satisfy (\ref{Littlewood}). Unfortunately the latter idea proves to be insufficient as shown in Theorem 2 in \cite{Velani00}:
\begin{theorem}\label{PV2}
Given $\alpha \in {\bf Bad}$ and $\lambda \in (0,1)$, there exists a subset ${\bf B}_{\lambda}(\alpha)$ of {\bf Bad} with $\dim_H {\bf B}_{\lambda}(\alpha) = \lambda$, such that for any $\beta \in {\bf B}_{\lambda}$,
$$||q_n(\alpha) \beta|| \geq \delta \mbox{ for all } n\in\mathbb{N},$$
where $\delta = \delta(\alpha, \lambda) > 0$ is a constant, and $\dim_H$ denotes the Hausdorff dimension. 
\end{theorem}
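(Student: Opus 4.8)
The plan is to realize ${\bf B}_\lambda(\alpha)$ as a Cantor set built by a greedy removal procedure on an $R$-adic grid. Writing $q_n:=q_n(\alpha)$, observe that $\|q_n\beta\|<\delta$ holds precisely when $\beta$ lies in one of the intervals $B(p/q_n,\delta/q_n):=\big(\tfrac{p}{q_n}-\tfrac{\delta}{q_n},\,\tfrac{p}{q_n}+\tfrac{\delta}{q_n}\big)$ with $p\in\mathbb Z$; hence it suffices to produce, for a suitable $\delta=\delta(\alpha,\lambda)>0$, a compact $K\subseteq[0,1]$ with $\dim_H K=\lambda$, with $K\subseteq{\bf Bad}$, and with $K\cap B(p/q_n,\delta/q_n)=\emptyset$ for all $n\ge1$ and all $p\in\mathbb Z$, and then set ${\bf B}_\lambda(\alpha):=K$. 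Fix a large integer $R=R(\alpha,\lambda)$ — its size is dictated by the estimates below and depends only on $\lambda$ and $M:=1+\sup_i a_i(\alpha)$, which is finite since $\alpha\in{\bf Bad}$ — and then small constants $\delta=\delta(\alpha,\lambda)>0$ and $c=c(\alpha,\lambda)>0$. Call an interval of length $R^{-j}$ in the $R$-adic grid a level-$j$ interval; it has $R$ children at level $j+1$. Starting from $\mathcal I_0=\{[0,1]\}$, define $\mathcal I_{j+1}$ from $\mathcal I_j$ as follows: inside each $I\in\mathcal I_j$ first discard every child that meets $B(p/q_n,\delta/q_n)$ for some $n$ for which $\delta/q_n$ is, at this level, a fixed factor below the child mesh $R^{-(j+1)}$, and every child that meets $B(p/q,c/q^2)$ for some rational $p/q$ for which $c/q^2$ is likewise just below $R^{-(j+1)}$; then from the children of $I$ that survive, select exactly $m_{j+1}$ of them, pairwise non-adjacent, and declare these to be the members of $\mathcal I_{j+1}$ contained in $I$. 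Put $K:=\bigcap_{j\ge0}\bigcup_{I\in\mathcal I_j}I$. Three properties are then immediate: once the children of $I$ meeting a given $B(p/q_n,\delta/q_n)$ have been discarded, every interval produced later lies inside a surviving child, hence stays disjoint from that interval, and since each $n$ is treated at exactly one level this yields $K\cap B(p/q_n,\delta/q_n)=\emptyset$ for all $n$, i.e. $\|q_n\beta\|\ge\delta$ for every $\beta\in K$ and every $n\ge1$; the same argument applied to the rationals gives $\mathrm{dist}(\beta,p/q)\ge c/q^2$ for all $\beta\in K$ and all $p/q$, hence $\|q\beta\|\ge c/q$ for all $q\ge1$ and $K\subseteq{\bf Bad}$; and $K\neq\emptyset$ as a decreasing intersection of nonempty compacts (every $m_j\ge1$).

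The substance of the argument is the counting estimate which ensures the procedure never collapses: inside each $I\in\mathcal I_j$ the two removal rounds together delete fewer than, say, $R/2$ of its $R$ children. For the $q_n$-intervals: because $q_{n+2}\ge2q_n$, only $O(\log R)$ values of $n$ are active at a given level; for each such $n$ the points $p/q_n$ lying in the relevant parent number $O(\delta R^{O(1)})$; and, by the choice of level, each $B(p/q_n,\delta/q_n)$ meets only $O(1)$ children — so this round deletes $O(\delta R^{O(1)}\log R)$ children, which is $<R/4$ once $\delta$ is small in terms of $R$. For the bad rationals: since two distinct rationals of denominator $\le Q$ differ by at least $Q^{-2}$, a parent of length $\ell$ contains $O(\ell Q^2+1)$ rationals of denominator $\le Q$; applied with $Q$ the largest denominator active at the level in question, this is $O(cR^{O(1)})$, again $<R/4$ once $c$ is small in terms of $R$, each $B(p/q,c/q^2)$ meeting $O(1)$ children. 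Hence at least $R/2$ children survive in every $I$, and any $m_{j+1}\le R/4$ is admissible. Choosing the $m_j$ greedily — $m_j$ close to $R^\lambda$ or $m_j=1$ according as the running ratio $\big(\sum_{i\le j}\log m_i\big)/(j\log R)$ lies below or above $\lambda$ — is legitimate because $R^\lambda<R/4$ for $R$ large (as $\lambda<1$), and it forces that ratio to $\lambda$; the standard mass distribution argument then gives $\dim_H K=\liminf_j\big(\sum_{i\le j}\log m_i\big)/(j\log R)=\lambda$, the lower bound coming from the measure that assigns mass $(m_1\cdots m_j)^{-1}$ to each level-$j$ interval (using that the selected intervals are separated) and the upper bound from the natural covers.

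I expect the main obstacle to be exactly this reconciliation of three competing requirements. A \emph{uniform} $\delta>0$ can be afforded only by resolving each $B(p/q_n,\delta/q_n)$ at the one scale where its radius matches the mesh, so that — thanks to the geometric growth of the $q_n$ — boundedly many are active per level; membership in ${\bf Bad}$ nominally forces one to dodge infinitely many rationals at every scale, but the quadratic spacing of rationals leaves only $O(cR^{O(1)})$ of them active per parent, at the cost of taking $c$ small; and reaching dimension exactly $\lambda$ requires a definite proportion of children to survive so the branching can be tuned. These constraints fix the order of the choices — $R$ large in terms of $\lambda$ and $M$, then $\delta$ and $c$ small in terms of $R$ — and the genuinely delicate point is to run the counting with explicit constants so that the surviving proportion is bounded below at \emph{every} node of the tree, not merely on average; the rest is routine Cantor-set bookkeeping.
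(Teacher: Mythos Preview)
The paper does not prove this theorem; it is quoted verbatim as Theorem~2 of Pollington--Velani \cite{Velani00} and used only as motivation. So there is no in-paper argument to compare your proposal against. That said, your construction is essentially the standard one for results of this kind, going back to Pollington \cite{Pollington79} and de~Mathan \cite{deMathan80} on Erd\H{o}s's lacunary problem, and it is in the same spirit as what Pollington--Velani actually do: the paper itself, in Section~4, runs a very similar Cantor scheme (on fundamental intervals rather than an $R$-adic grid) to prove Proposition~\ref{prop2}.

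Two small points of bookkeeping. First, your count of points $p/q_n$ in a parent $I$ of length $R^{-j}$ is $q_n|I|+O(1)$, not $O(\delta R^{O(1)})$ alone; the additive $O(1)$ persists however small $\delta$ is, and it, multiplied by the $O(\log R)$ active values of $n$, is what really controls this round of deletions. This is still $\ll R$ for $R$ large, so the argument survives, but as written the dependence on $\delta$ is misleading. Second, the mass-distribution lower bound you invoke (the paper states it as Lemma~\ref{Fal46}) requires $m_k\ge 2$ at every level; if you allow $m_j=1$ to tune the ratio, you must collapse those levels before applying the lemma. Cleaner is to take every $m_j\in\{\lfloor R^{\lambda}\rfloor,\lceil R^{\lambda}\rceil\}$, which already forces $\big(\sum_{i\le j}\log m_i\big)/(j\log R)\to\lambda$ without ever dropping to $1$, and the non-adjacency gives gaps $\epsilon_k\ge R^{-k}$, so both bounds match at $\lambda$.
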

The above result is connected with a question posed by P. Erd\H{o}s in \cite{Erdos75}:
Given a sequence of integers $n_1 < n_2 < n_3 < \cdots$, satisfying $n_{k+1}/n_k \geq \lambda > 1$ for $k = 1, 2, \cdots$, does there exist a $\xi$ for which the sequence $\{n_k \xi\}$ is not everywhere dense? ($\{ x \}$ denotes the fractional part of $x$).

A sequence $\{n_k\}$ of the type above is called a {\em lacunary sequence}. The above question has been answered independently by A. Pollington \cite{Pollington79} and B. de Mathan \cite{deMathan80} who showed that the set of $\xi$ as above has full Hausdorff dimension for any lacunary sequence $\{n_k\}$. The connection of these results with Theorem \ref{PV2} is made once one uses the well-known fact (see e.g. \cite{Khinchin64}):
\begin{equation}\label{lac}
 \frac{q_n(\alpha)}{q_{n-1}(\alpha)} = a_n + [a_{n-1},\cdots,a_1] \mbox{ for } n\geq 1.
\end{equation}
Then if $\alpha \in {\bf Bad}$ the above expression is bounded from below by a universal constant $K(\alpha) > 1$, for all $n$, which implies $\{q_n(\alpha)\}$ is a lacunary sequence.\\

As discussed by Pollington and Velani, Theorem \ref{PV2} shows that there is ``absolutely no hope of proving Littlewood's conjecture by simply looking at the convergents." The result of the Theorem \ref{PV2} indicates that one cannot prove the conjecture if only the convergents of one of the two numbers is considerd; however it does not rule out the possibility of proving (\ref{Littlewood}) by looking at the convergents of both $\alpha$ and $\beta$. We formulate two specific questions along those lines below:\\

\setlength{\parindent}{0pt}

{\em Question 1.} Do there exist $(\alpha, \beta) \in {\bf Bad^2}$ such that $\inf_{n\geq1}q_n(\alpha)||q_n(\alpha) \alpha||||q_n(\alpha) \beta|| > 0$ and $\inf_{n\geq1}q_n(\beta)||q_n(\beta) \alpha||||q_n(\beta) \beta|| > 0$.\\

\vspace{2mm}

{\em Question 2.} If $\alpha \in {\bf Bad}$, what is the size of $A(\alpha) \subset {\bf Bad}$,  such that for $\beta \in A(\alpha)$ one has $\inf_{n\geq1}q_n(\alpha)||q_n(\alpha) \alpha||||q_n(\alpha) \beta|| > 0$ and $\inf_{n\geq1}q_n(\beta)||q_n(\beta) \alpha||||q_n(\beta) \beta|| > 0$.\\

\vspace{2mm}

\setlength{\parindent}{20pt}

The purpose of this paper is to establish partial answers to the above two questions, with our main results presented in the next section. In the questions above we have specifically restricted our attention to the case when both $\alpha$ and $\beta$ are badly approximable numbers, since that is the most relevant case for Littlewood's conjecture. In addition, in view of the fact that the Lebesgue measure of {\bf Bad} is $0$, the set $A(\alpha)$ is also of measure zero, hence our discussion of the size of $A(\alpha)$, will be in terms of its Hausdorff dimension. Finally, we remark that it is somewhat surprising that the above questions have to date escaped attention and, to the author's knowledge, this paper is an initial attempt to address them.

\section{Main Results}

Our first result shows that there are many pairs $(\alpha, \beta)$ of real numbers, for which Littlewood's conjecture fails, if one restricts it to the denominators of the convergents of $\alpha$ and $\beta$.

\begin{proposition}\label{prop1}
Let 
$$A = \{ (x,y) \in {\bf Bad}^2 \ | \exists c(x,y) > 0 \mbox{ s.t. }||q_n(x)x||||q_n(x)y||q_n(x) \geq c(x,y) \mbox{ and}$$
$$||q_n(y)x||||q_n(y)y||q_n(y) \geq c(x,y)\}.$$
Then 
$$\frac{3}{2} \leq \dim_HA \leq 2.$$
\end{proposition}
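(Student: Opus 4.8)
The upper bound requires no work: $A\subseteq\mathbb{R}^2$, so $\dim_H A\le 2$. For the lower bound I would first reduce the defining condition. If $x,y\in{\bf Bad}$ have all partial quotients bounded by some $M$, then by (\ref{lac}) one has $q_n(x)||q_n(x)x||\in[(M+2)^{-1},1]$ for every $n$, and likewise for $y$; hence, after replacing $c(x,y)$ by a comparable constant, the two inequalities cutting out $A$ are equivalent to $\inf_{n\ge1}||q_n(x)y||>0$ and $\inf_{m\ge1}||q_m(y)x||>0$. My plan is then to produce a large subset of $A$ that fibers over the first coordinate, with a full–dimensional base and fibers of dimension $\ge 1/2$, and to conclude with the standard slicing bound $\dim_H A\ge\dim_H(\pi_1 A)+\inf_{\alpha}\dim_H A_\alpha$ valid for sufficiently uniform Borel families of fibers.

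Fix large integers $M,K$, let ${\bf Bad}_M=\{\alpha : a_i(\alpha)\le M\ \forall i\}$ (so $\dim_H{\bf Bad}_M\to1$ as $M\to\infty$, and $\mathcal{H}^s({\bf Bad}_M)>0$ for $s<\dim_H{\bf Bad}_M$), and let $\alpha$ range over ${\bf Bad}_M$. For each such $\alpha$ I would recursively build a Cantor set $B_K(\alpha)\subseteq{\bf Bad}$ of numbers $\beta=[0;b_1,b_2,\dots]$ with $b_m\le$ (some large bound depending on $K$), pruning the cylinder tree so as to enforce both conditions. To keep $||q_n(\alpha)\beta||\ge c$ for all $n$: at the first scale where the $\beta$-cylinder has length below $1/q_n(\alpha)$, discard the sub-cylinders meeting the $c/q_n(\alpha)$-neighbourhood of $\tfrac{1}{q_n(\alpha)}\mathbb{Z}$; since $\{q_n(\alpha)\}$ is lacunary with ratio bounded in terms of $M$, only boundedly many $n$ are active per scale and this costs at most a constant-fraction loss of branches at each step — this is exactly the mechanism behind Theorem \ref{PV2}.

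To keep $||q_m(\beta)\alpha||\ge c$ for all $m$: when appending $b_{m+1}$ I would use the exact identity $q_{m+1}(\beta)\alpha=q_{m-1}(\beta)\alpha+b_{m+1}\,q_m(\beta)\alpha$; since the inductive hypothesis gives $||q_m(\beta)\alpha||\ge c$, the $K$ candidates for $b_{m+1}$ make $q_{m+1}(\beta)\alpha$ run through a progression with bounded-below gap, so at most $\approx 2cK+O(1/c)$ of them land within $c$ of an integer, and I keep the rest. Balancing $K$ against the geometric growth rate of $q_m(\beta)$, this construction should give $\dim_H B_K(\alpha)\ge 1/2-\epsilon(K)$ with $\epsilon(K)\to0$, uniformly in $\alpha\in{\bf Bad}_M$. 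Granting this, $A_{M,K}:=\{(\alpha,\beta):\alpha\in{\bf Bad}_M,\ \beta\in B_K(\alpha)\}\subseteq A$, and after checking the construction is measurable in $\alpha$, the slicing bound yields $\dim_H A\ge\dim_H A_{M,K}\ge\dim_H{\bf Bad}_M+1/2-\epsilon(K)$; letting $M,K\to\infty$ and using that the Hausdorff dimension of a countable union is the supremum of the dimensions gives $\dim_H A\ge 3/2$.

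The main obstacle is the third step: the requirement $\inf_m||q_m(\beta)\alpha||>0$ is self-referential, because the integers $q_m(\beta)$ that must stay non-resonant with $\alpha$ are produced by the very choices being made, and one has to verify that maintaining a single constant $c$ at every scale does not erode the dimension of the fiber. It is precisely this bookkeeping — the interaction between the alphabet size, the constant $c$, and the growth of $q_m(\beta)$ — that caps the fiber dimension at $1/2$ in this approach and leaves the gap between $3/2$ and $2$ open.
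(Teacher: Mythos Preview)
Your approach is essentially the paper's: reduce to a fiber estimate $\dim_H A(x)\ge 1/2$ for every $x\in{\bf Bad}$, then combine with the slicing inequality (Falconer, Corollary~7.12 --- no measurability or uniformity check is needed, so you can take the base to be all of ${\bf Bad}$ directly rather than exhausting by ${\bf Bad}_M$). The Cantor pruning of continued-fraction cylinders, handling the two constraints $\inf_n||q_n(\alpha)\beta||>0$ and $\inf_m||q_m(\beta)\alpha||>0$ at separate stages, is exactly what the paper does.

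The one step that fails as written is your count ``at most $\approx 2cK+O(1/c)$ land within $c$ of an integer.'' The step of the progression is $q_m(\beta)\alpha$, and all you know is $||q_m(\beta)\alpha||\ge c$; if this quantity happens to be close to $1/2$ modulo $1$, then roughly half of the $K$ candidates for $b_{m+1}$ are bad, no matter how small $c$ is, so the bound $2cK+O(1/c)$ is simply false. The paper repairs this with an elementary lemma: if $||A\alpha||>c$ and $c<1/4$, then among $(A+B)\alpha,\ (2A+B)\alpha,\ (3A+B)\alpha$ at least one has norm $>c$. Applied with $A=q_m(\beta)$, $B=q_{m-1}(\beta)$, this guarantees that at least one in every three consecutive choices of $b_{m+1}$ is admissible, giving $\ge K/3$ surviving branches. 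With that substitution your sketch becomes the paper's proof, and the $1/2$ in the fiber (hence the $3/2$ overall) comes out for exactly the reason you identify in your last paragraph.
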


Since $A \subset \mathbb{R}^2$, we trivially obtain the bound $\dim_HA \leq 2$. The lower bound of the dimension in the proposition will be the focus of our discussion. The proof of Proposition \ref{prop1} will be carried out by a reduction to one dimension, and application of Proposition \ref{prop2}, which we state below.

\begin{proposition}\label{prop2}
Let $x \in {\bf Bad}$ and let $A$ be as in Proposition \ref{prop1}. Denote by $A(x) = A \cap L_x$, where $L_x$ is the vertical line in $\mathbb{R}^2$, passing through $x$. Then $\dim_H A(x) \geq \frac{1}{2}.$
\end{proposition}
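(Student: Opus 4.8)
The plan is to deduce Proposition~\ref{prop2} from a one–dimensional statement together with a Cantor set construction based on continued fractions. First I would record the elementary two–sided estimate: if $z\in{\bf Bad}$ has partial quotients bounded by $K$, then $\frac{1}{K+2}\leq q_n(z)\,\|q_n(z)z\|\leq 1$ for every $n$ — the lower bound from $|q_nz-p_n|>\tfrac{1}{q_{n+1}+q_n}$ together with $q_{n+1}/q_n\leq K+1$, the upper bound from $|q_nz-p_n|<\tfrac{1}{q_{n+1}}$. Hence, as $y$ ranges over any family of badly approximable numbers whose partial quotients are bounded by a common constant $N$, membership of $(x,y)$ in $A$ is equivalent to the pair of conditions
$$\inf_{m\geq 1}\|q_m(x)\,y\|>0 \qquad\text{and}\qquad \inf_{k\geq 1}\|q_k(y)\,x\|>0 ,$$
the constant $c(x,y)$ being then an explicit multiple of these two infima. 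Since $A(x)$ is canonically identified with $\{y\in\mathbb R:(x,y)\in A\}$, it suffices to produce, for each $\eta>0$, a set $\mathcal C=\mathcal C(x,\eta)\subset{\bf Bad}$ with $\dim_H\mathcal C\geq \tfrac12-\eta$, consisting of numbers with uniformly bounded partial quotients and satisfying both displayed conditions; taking the supremum over $\eta\downarrow 0$ yields $\dim_H A(x)\geq \tfrac12$.

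Next I would build $\mathcal C$ as the limit set of a tree of continued–fraction cylinders $[0;b_1,\dots,b_k]$ of $y$, with a large parameter $N=N(\eta)$ and all $b_i$ drawn from $\{1,\dots,N\}$ (this already forces $\mathcal C\subset{\bf Bad}$). Fixing $x=[a_0;a_1,a_2,\dots]$ with partial quotients $\leq K$, so that $(q_m(x))$ is lacunary with ratios in a fixed interval $[\kappa,\kappa']$, $\kappa>1$, I would alternate two kinds of steps. At a \emph{free} step the new partial quotient ranges over all of $\{1,\dots,N\}$, except that one discards those values for which the freshly created convergent denominator $q_k(y)$ fails $\|q_k(y)x\|\geq c''$; because $x\in{\bf Bad}$ one has $\|nx\|\gg 1/n$, and as the partial quotient varies the residue $q_k(y)x\bmod 1$ runs through an arithmetic progression of step $\|q_{k-1}(y)x\|\geq c''$, so only a bounded proportion of values are lost. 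At a \emph{correcting} step the new partial quotient is chosen in a bounded range $\{1,\dots,N_0\}$ so as to (a) keep $\|q_k(y)x\|\geq c''$, and (b) place the cylinder — which at this scale has length comparable to the gaps $\asymp q_m(x)^{-1}$ between the rationals $j/q_m(x)$ for the index $m$ then under consideration — in a gap of the forbidden set $\bigcup_j\big(\tfrac{j}{q_m(x)}-\tfrac{c'}{q_m(x)},\tfrac{j}{q_m(x)}+\tfrac{c'}{q_m(x)}\big)$, with room to spare so that the whole subtree stays out of it. Existence of a partial quotient meeting (a) and (b) simultaneously follows, for $c'$ small and $N_0$ large, from a counting argument: (a) excludes a $\leq 2c''$ fraction of the range, and since the parent cylinder meets only $O(1)$ of the forbidden intervals — provided one has arranged at the preceding free step that the accumulation point $p_{k}(y)/q_{k}(y)$ of the children avoids them — (b) excludes only a bounded number of values.

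Finally I would equip $\mathcal C$ with a measure $\mu$ obtained by distributing the mass of each surviving cylinder among its surviving children, and estimate $\mu(B(y,r))$ so as to apply the mass distribution principle and conclude $\dim_H\mathcal C\geq \tfrac12-\eta$. The exponent $\tfrac12$ is exactly what this scheme yields: a continued–fraction cylinder of $y$ contracts like $q_k(y)^{-2}$, while the rational structure of $x$ is tested only at spacing $q_m(x)^{-1}\asymp q_k(y)^{-2}$, i.e.\ at $y$–denominators of size $q_m(x)^{1/2}$, so only "half" (on a logarithmic scale) of the branching budget available between $q_m(x)$ and $q_{m+1}(x)$ is genuinely free; equivalently, every other partial quotient of $y$ is spent on corrections, and a standard pressure computation then forces $\dim_H\mathcal C\to\tfrac12$ as $N\to\infty$. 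I expect the main obstacle to be precisely this accounting carried out honestly: showing that the two families of restrictions — one viewing $q_k(y)$ as a point of the line, the other as a multiplier of $x$ — can be met at every level while retaining a definite proportion of the mass, controlling the children lost near the accumulation points of the cylinders, and verifying that the surviving tree still carries a measure with Frostman exponent $\tfrac12-\eta$.
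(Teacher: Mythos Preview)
Your proposal is correct and follows essentially the same strategy as the paper: build a Cantor set of $y$'s via continued-fraction cylinders with partial quotients bounded by a large parameter, prune children at each level to enforce both $\|q_k(y)x\|>c$ (via the arithmetic-progression structure of $q_k(y)=b_kq_{k-1}(y)+q_{k-2}(y)$) and $\|q_m(x)y\|>c$ (by discarding cylinders meeting the $O(\log M)$ forbidden strips at the relevant scale), and then let the parameter tend to infinity to push the dimension to $1/2$.

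The differences are organizational rather than substantive. The paper imposes all six conditions simultaneously at every level and uses Falconer's gap lemma (Example~4.6) for the dimension bound; you instead propose alternating ``free'' and ``correcting'' levels and invoking the mass distribution principle. The paper isolates your arithmetic-progression observation as a clean lemma (if $\|Ax\|>c$ then at least one of $\|(A+B)x\|,\|(2A+B)x\|,\|(3A+B)x\|$ exceeds $c$), which makes the ``bounded proportion lost'' claim immediate; and it handles the avoidance of the forbidden strips by three short pigeonhole claims rather than the preparatory step you describe. One point to watch in your alternating scheme: between two consecutive free levels the number of $q_m(x)$'s to be avoided is $O(\log N/\log\kappa)$, not $1$, so a single correcting step per free step is not literally enough --- the paper sidesteps this by absorbing the $O(\log M)$ deletions into the $M/16$ surviving children at each level. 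Your heuristic for the exponent $1/2$ (cylinders of $y$ contract like $q_k(y)^{-2}$ while the $x$-constraints live at scale $q_m(x)^{-1}\asymp q_k(y)^{-2}$) is exactly the mechanism behind the paper's computation $\frac{\log(M/32)}{2\log M}\to\frac12$.
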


The reduction from Proposition \ref{prop1} to Proposition \ref{prop2} is established using the following corollary.
\begin{corollary}\label{Fal712}
Let $F$ be any subset of $\mathbb{R}^2$ and let $E$ be a subset of the $x$-axis. If $\dim_H (F \cap L_x) \geq t$ for all $x \in E$, then $\dim_HF \geq t + \dim_HE$. ($L_x$ is the line parallel to the $y$-axis passing through the point $x$).
\end{corollary}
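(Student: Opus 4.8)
\emph{Plan of proof.} This is a standard slicing estimate of geometric measure theory, derived directly from the covering definition of Hausdorff measure. We may assume $t>0$ and $E\neq\emptyset$, so that every slice $F\cap L_x$ with $x\in E$ is nonempty; if $\dim_HE=0$ the assertion reduces to $\dim_HF\ge t$, immediate from $F\supseteq F\cap L_x$ for any $x\in E$. Assume then $\dim_HE>0$ and fix arbitrary exponents $0\le s<\dim_HE$ and $0\le u<t$. It suffices to prove $\mathcal H^{s+u}(F)>0$: this forces $\dim_HF\ge s+u$, and letting $s\uparrow\dim_HE$ and $u\uparrow t$ gives the conclusion. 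Suppose, to the contrary, that $\mathcal H^{s+u}(F)=0$.

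The structural step is to partition $E$ according to how stubbornly each slice resists being covered. Since $u<t\le\dim_H(F\cap L_x)$, we have $\mathcal H^u(F\cap L_x)=\infty$ for every $x\in E$, hence $\mathcal H^u_{1/k}(F\cap L_x)\uparrow\infty$ as $k\to\infty$. Consequently the sets
\[
 E_k=\bigl\{\,x\in E:\ \mathcal H^u_{1/k}(F\cap L_x)\ge 1\,\bigr\}
\]
increase to $E$, so by countable stability of Hausdorff dimension there is an index $k$ with $\dim_HE_k>s$; in particular $\mathcal H^s_{\varepsilon}(E_k)>0$ for some $\varepsilon>0$. Fix such $k$ and $\varepsilon$, and put $\delta=\min(1/k,\varepsilon)$.

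Finally one transfers an efficient cover of $F$ to an efficient cover of $E_k$ through the orthogonal projection $\pi\colon\mathbb R^2\to\mathbb R$ onto the $x$-axis, which is $1$-Lipschitz. Since $\mathcal H^{s+u}(F)=0$, for every $\eta>0$ there is a countable cover $\{U_i\}$ of $F$ with $|U_i|\le\delta$ and $\sum_i|U_i|^{s+u}<\eta$. Let $I_i=\pi(U_i)$, an interval with $|I_i|\le|U_i|\le\delta$. For each $x\in E_k$ the subfamily $\{U_i:x\in I_i\}$ covers $F\cap L_x$ by sets of diameter $\le\delta\le 1/k$, whence $\sum_{i:\,x\in I_i}|U_i|^u\ge\mathcal H^u_{1/k}(F\cap L_x)\ge 1$; that is, $h(x):=\sum_i|U_i|^u\,\mathbf 1_{I_i}(x)$ satisfies $h\ge 1$ on $E_k$. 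Inserting this pointwise bound into a Fubini-type estimate for the Hausdorff content $\mathcal H^s_{\varepsilon}$ — using $\mathcal H^s_{\varepsilon}(I_i)\le|I_i|^s\le|U_i|^s$, as $I_i$ is a one-set cover of itself at scale $\varepsilon$ — yields
\[
 \mathcal H^s_{\varepsilon}(E_k)\ \le\ \sum_i|U_i|^u\,\mathcal H^s_{\varepsilon}(I_i)\ \le\ \sum_i|U_i|^{s+u}\ <\ \eta .
\]
As $\eta>0$ was arbitrary, $\mathcal H^s_{\varepsilon}(E_k)=0$, contradicting the choice of $\varepsilon$. I expect this last inequality to be the only real obstacle: the crude estimate for $\mathcal H^s$ of an interval is infinite once $s<1$, so the content scale $\delta$ must be kept tied simultaneously to the slicing resolution $1/k$ and to the scale $\varepsilon$ at which $E_k$ is measured, and the passage from a countable cover of $F$ to a weighted cover of $E_k$ requires the usual subadditivity/upper-integral bookkeeping (a harmless multiplicative constant here would be equally good). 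The remaining ingredients — countable stability of $\dim_H$ and the Lipschitz bound $|\pi(U)|\le|U|$ — are routine.
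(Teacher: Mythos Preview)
The paper gives no proof here; it simply cites Falconer's textbook. Your sketch is essentially the standard slicing argument and, up to packaging, is how Falconer proves it. The one step you rightly flag --- deducing $\mathcal H^s_{\varepsilon}(E_k)\le C\sum_i|U_i|^u\,|I_i|^s$ from the pointwise bound $\sum_i|U_i|^u\,\mathbf 1_{I_i}\ge\mathbf 1_{E_k}$ --- does \emph{not} follow from mere countable subadditivity of the outer measure $\mathcal H^s_\varepsilon$, since the coefficients $|U_i|^u$ are non-integer weights. The clean fix is Frostman's lemma: from $\dim_H E_k>s$ one obtains a nonzero Borel measure $\mu$ supported on $E_k$ with $\mu(I)\le C|I|^s$ for every interval $I$; integrating the inequality $h\ge 1$ against $\mu$ then gives
\[
0<\mu(E_k)\le\sum_i|U_i|^u\,\mu(I_i)\le C\sum_i|U_i|^{s+u}<C\eta,
\]
the required contradiction. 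With that ingredient (which supplies precisely the ``harmless multiplicative constant'' you anticipated) your argument is complete.
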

\begin{proof}
This is Corollary 7.12, pp.106 in \cite{Falconer}.
\end{proof}
Assuming the validity of Proposition \ref{prop2}, we have by Corollary \ref{Fal712} that $\dim_H A \geq \frac{1}{2} + \dim_H{\bf Bad}$, and since $\dim_H {\bf Bad} = 1$, the conclusion of Proposition \ref{prop1} follows. Hence our problem reduces to proving Proposition \ref{prop2}.

\section{Auxiliary Results}

Before we go into the proof of Proposition \ref{prop2} we fix some notation and present some results to be used throughout the rest of the paper.\\

To ease our notation we write $[a,b]$ for the interval $[a,b]$ if $a < b$ and $[b,a]$ if $a > b$.
\begin{proposition}
Let $\{a_k\}_{k\geq 0}$ be a sequence of nonnegative integers with $a_i > 0$ for $i \geq 1$. Denote by $\frac{p_n}{q_n} = [a_0; a_1,...,a_n]$ and $\frac{p_{n-1}}{q_{n-1}} = [a_0; a_1,...,a_{n-1}]$. Then the set of numbers $X_n = \{x \in \mathbb{R}/\mathbb{Q} | x = [b_0; b_1,...,b_n,...] $ and $b_i = a_i$ for $0 \leq i \leq n \}$ is the set $I_n = \big[\frac{p_n}{q_n}, \frac{p_{n-1} + p_n}{q_{n-1} + q_n}\big ]/\mathbb{Q}$.  
\end{proposition}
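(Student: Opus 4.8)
The plan is to describe the set $X_n$ of all irrationals whose continued fraction expansion begins with the prescribed block $a_0, a_1, \dots, a_n$, and to show it coincides with $I_n/\mathbb{Q}$, where $I_n$ has the two stated endpoints. I would begin by recalling the standard identity for a tail: if $x = [a_0; a_1, \dots, a_n, t]$ for a real parameter $t > 1$ (thinking of $t = [a_{n+1}; a_{n+2}, \dots]$ as the $(n+1)$st complete quotient), then the fundamental recursion for convergents gives
$$x = \frac{t\, p_n + p_{n-1}}{t\, q_n + q_{n-1}}.$$
This is the algebraic heart of the argument, and I would derive it (or cite it) from $p_k = a_k p_{k-1} + p_{k-2}$, $q_k = a_k q_{k-1} + q_{k-2}$ by treating the last partial quotient as the real number $t$.

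Next I would analyze the map $t \mapsto \varphi(t) := \frac{t p_n + p_{n-1}}{t q_n + q_{n-1}}$ on the domain $t \in (1, \infty)$. Its derivative is $\varphi'(t) = \frac{p_n q_{n-1} - p_{n-1} q_n}{(t q_n + q_{n-1})^2} = \frac{(-1)^{n+1}}{(t q_n + q_{n-1})^2}$, using the standard determinant identity $p_n q_{n-1} - p_{n-1} q_n = (-1)^{n+1}$. Hence $\varphi$ is strictly monotone on $(1,\infty)$ — increasing if $n$ is odd, decreasing if $n$ is even — so it maps $(1,\infty)$ bijectively onto the open interval with endpoints $\varphi(1)$ and $\varphi(\infty)$. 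We compute $\varphi(\infty) = \lim_{t\to\infty} \varphi(t) = p_n/q_n$ and $\varphi(1) = \frac{p_n + p_{n-1}}{q_n + q_{n-1}}$. Thus the \emph{open} interval swept out is exactly the interior of $I_n = \big[\frac{p_n}{q_n}, \frac{p_{n-1}+p_n}{q_{n-1}+q_n}\big]$ (recall the paper's convention that $[a,b]$ denotes the interval regardless of the order of $a,b$).

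The final step is to match this with the set $X_n$. If $x \in X_n$ is irrational with $x = [a_0; a_1, \dots, a_n, a_{n+1}, \dots]$, then setting $t = [a_{n+1}; a_{n+2}, \dots]$ we have $t > 1$ (in fact $t \geq 1$, and $t = 1$ is impossible for an infinite expansion of an irrational in lowest terms, but more simply $a_{n+1} \geq 1$ forces $t \ge 1$ and irrationality of the tail forces $t > 1$), so $x = \varphi(t)$ lies in the open interval, i.e. in $I_n/\mathbb{Q}$. Conversely, given an irrational $x$ in the interior of $I_n$, monotonicity gives a unique $t \in (1,\infty)$ with $x = \varphi(t)$; since $x$ is irrational, so is $t$, and expanding $t = [a_{n+1}; a_{n+2}, \dots]$ and substituting back shows $x = [a_0; a_1, \dots, a_n, a_{n+1}, \dots]$, so $x \in X_n$. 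One should also check that the endpoints themselves are rational (they are, being explicit ratios of integers), so removing them is automatic once we restrict to $\mathbb{R}\setminus\mathbb{Q}$; this reconciles the closed interval $I_n$ in the statement with the open interval swept out by $\varphi$. The main (minor) obstacle is bookkeeping the parity of $n$ so that the two endpoints are correctly identified as $\varphi(\infty)$ and $\varphi(1)$ irrespective of which is larger, which is precisely why the paper adopts the unordered-interval convention $[a,b]$; beyond that the argument is a routine application of the convergent recursion and the determinant identity.
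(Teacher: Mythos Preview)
Your proof is correct and takes a genuinely different route from the paper's. The paper argues by induction on $n$: the base case $n=0$ is immediate, and for the inductive step one writes $x = a_0 + 1/r_1$ with $r_1 = [a_1; a_2, \dots]$, applies the induction hypothesis to $r_1$ (whose convergents $p'_s/q'_s$ are related to those of $x$ via $p_s = a_0 p'_{s-1} + q'_{s-1}$, $q_s = p'_{s-1}$), and then transports the interval for $r_1$ through the map $r_1 \mapsto a_0 + 1/r_1$. You instead use the global M\"obius parametrization $\varphi(t) = (t p_n + p_{n-1})/(t q_n + q_{n-1})$ together with the determinant identity $p_n q_{n-1} - p_{n-1} q_n = (-1)^{n+1}$ to establish strict monotonicity on $(1,\infty)$, and read off the endpoints as $\varphi(\infty) = p_n/q_n$ and $\varphi(1) = (p_n + p_{n-1})/(q_n + q_{n-1})$. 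Your approach is more direct and avoids induction at the cost of invoking the determinant identity; the paper's induction is more self-contained but requires tracking the convergent relations under the shift. Both handle the parity/ordering issue the same way, via the unordered-interval convention.
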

\begin{proof}
We proceed by indutction on $n$.\\
Base case: n = 0.
As is typical in the theory of continued fractions we use the convention that $q_{-1} = 0$ and $p_{-1} = 1$. Then we have that $x \in X_0 \iff b_0 = a_0 \iff  a_0 \leq x < a_0 + 1\iff x \in I_0$.

So now suppose that we have proved the statement for $n \leq k$, and consider $n = k + 1$. Let $x \in X_n$ and $x = [a_0; a_1,...,a_n,...]$.  We set $r_1 = [a_1;a_2,...,a_n,...]$ and we have that $x = a_0 + 1/r_1$. Moreover if $p'_s/q'_s$, denotes the $s$-th order convergent of $r_1$ we have that (see e.g. \cite{Khinchin64} pp.4)
$$p_s = a_0p'_{s-1} + q'_{s-1}$$
$$q_s = p'_{s-1}.$$
Consequently we apply our induction hypothesis to $r_1$, and obtain that it lies in the set $I'_k = \big[\frac{p'_k}{q'_k}, \frac{p'_{k-1} + p'_k}{q'_{k-1} + q'_k}\big ]/\mathbb{Q}$. Hence $a_0 + 1/r_1 \in \big[a_0 + \frac{q'_k}{p'_k}, a_0 + \frac{q'_{k-1} + q'_k}{p'_{k-1} + p'_k}\big]$.
$$a_0 + \frac{q'_k}{p'_k} = a_0 + \frac{p_n - a_0q_n}{q_n} = \frac{p_n}{q_n}$$
$$a_0 + \frac{q'_{k-1} + q'_k}{p'_{k-1} + p'_k} = a_0 + \frac{p_{n-1} - a_0q_{n-1} + p_n - a_0q_n}{q_{n-1} + q_n} = \frac{p_n + p_{n-1}}{q_n + q_{n-1}}$$
Hence we obtain $x\in I_n$.\\

Conversely suppose that $x \in I_n$. In particular, we have that $x \in I_0$ and thus by induction hypothesis $x = [a_0; b_1,...,b_n,...]$. As before we set $r_1 = [b_1;b_2,...,b_n,...]$, so that $x = a_0 + 1/r_1$, and let $p'_s/q'_s$, denote the $s$-th order convergent of $[a_1;a_2,...,a_n,...]$.
We have that since $x\in I_n$ then $1/r_1 \in \big[\frac{p_n}{q_n} - a_0, \frac{p_{n-1} + p_n}{q_{n-1} + q_n} - a_0\big ]$ and
$$\big[\frac{p_n}{q_n} - a_0, \frac{p_{n-1} + p_n}{q_{n-1} + q_n} - a_0\big ] =  \big[\frac{p_n - a_0q_n}{q_n}, \frac{p_{n-1} + p_k - a_0q_{n} - a_0q_{n - 1}}{q_{n} + q_{n-1}} \big ] = \big[\frac{q'_{k}}{p'_{k}}, \frac{q'_{k} + q'_{k-1}}{p'_{k} + p'_{k-1}} \big].$$
Hence $r_1 \in \big[\frac{p'_{k}}{q'_{k}}, \frac{p'_{k} + p'_{k-1}}{q'_{k} + q'_{k-1}} \big]$. Applying the induction hypothesis we get that $r_1 = [a_1;a_2,...,a_n,...]$, which proves that $x \in X_n$.

\end{proof}

We shall call an interval of the above form a {\em fundamental interval of order $n$}, and specify it uniquely as $I(a_0, ..., a_n) =  \big[\frac{p_n}{q_n}, \frac{p_{n-1} + p_n}{q_{n-1} + q_n}\big ]$. Whenever we work in the unit interval, where $a_0 = 0$, we shall omit it from the notation.\\

{\em Some remarks:} It is easy to see that two fundamental intervals: $I(a_1,...,a_n)$, $I(b_1,...,b_n)$ of the same order are disjoint except possibly at their endpoints. Indeed, if the interiors of the two fundamental intervals intersect non-trivially we will have an irrational point $x$, lying in both. Consequently, we will have that the continued fraction expansion of $x$ begins with $[a_1,...,a_n,...]$ and $[b_1,...,b_n,...]$. Given that the continued fraction expansion is unique we obtain $a_i = b_i$ for $1 \leq i \leq n$, implying  $I(a_1,...,a_n) = I(b_1,...,b_n)$. 

\vspace{2mm}

Since all irrationals in a fundamental interval of order $n$ agree on the first $n$ elements of their continued fraction, we have that they have the same convergents $p_s/q_s$, for $s \leq n$. We will call the numbers $q_1, \cdots, q_n$, the {\em associated denominators} of the fundamental interval $I(a_1,\cdots, a_n)$.

\vspace{2mm}

The following is a technical lemma that we will use in the proof of Proposition \ref{prop2}.

\begin{lemma}\label{spec}
Let $x \in \mathbb{R}$ be irrational and c a positive constant less than $1/4$. If $A,B$ be arbitrary positive integers such that $||Ax|| > c$, then at least one of the following inequalities holds:
$$||(A + B)x|| > c \hspace{5mm} ||(2A + B)x|| > c \hspace{5mm} ||(3A + B)x|| > c.$$
\end{lemma}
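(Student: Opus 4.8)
The plan is to argue by contradiction: assume that all three quantities $||(A+B)x||$, $||(2A+B)x||$ and $||(3A+B)x||$ are at most $c$, and show that this forces $||Ax|| \le c$ as well, contradicting the hypothesis. The key structural observation is that $A+B$, $2A+B$, $3A+B$ form an arithmetic progression with common difference $A$, so $Ax$ can be recovered in two different ways as a difference of consecutive terms; combining this with the assumed smallness of all three remainders will produce an integer that is forced to vanish precisely because $c < 1/4$.

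Concretely, for $j = 1, 2, 3$ I would write $(jA+B)x = \ell_j + r_j$ with $\ell_j \in \mathbb{Z}$ and $r_j \in (-1/2, 1/2]$ the signed distance to the nearest integer, so that $|r_j| = ||(jA+B)x|| \le c$ under the contradiction hypothesis. Computing $Ax = (2A+B)x - (A+B)x$ and $Ax = (3A+B)x - (2A+B)x$ and equating the two expressions, the integer parts combine to give $2\ell_2 - \ell_1 - \ell_3 = r_1 - 2r_2 + r_3$. The left-hand side is an integer, while the right-hand side has absolute value at most $|r_1| + 2|r_2| + |r_3| \le 4c < 1$; hence both sides equal $0$, i.e. $r_2 = (r_1 + r_3)/2$.

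Substituting this relation back into $Ax = (\ell_2 - \ell_1) + (r_2 - r_1) = (\ell_2 - \ell_1) + (r_3 - r_1)/2$, and noting $\ell_2 - \ell_1 \in \mathbb{Z}$, one gets $||Ax|| = |(r_3 - r_1)/2| \le (|r_1| + |r_3|)/2 \le c$, the desired contradiction. This completes the argument.

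I do not expect a genuine obstacle; the only non-routine decision is to examine the \emph{second} difference $r_1 - 2r_2 + r_3$ rather than the pairwise differences $r_i - r_j$ — the latter only yield $||Ax|| \le 2c$, which is too weak. Once the second difference is in play, the hypothesis $c < 1/4$ enters exactly where it is needed, namely to force a bounded integer to be zero, and everything else is bookkeeping. (The irrationality of $x$ plays no essential role and is used, if at all, only to make the $r_j$ unambiguous.)
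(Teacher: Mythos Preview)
Your proof is correct and takes a genuinely different route from the paper's. The paper also argues by contradiction but proceeds via an explicit four-case analysis on the fractional parts $a=\{Ax\}$ and $b=\{(A+B)x\}$, splitting according to whether each lies below or above $1/2$, and in each case tracking $\{(2A+B)x\}$ and, when necessary, $\{(3A+B)x\}$ by hand until a contradiction appears. You instead exploit the arithmetic-progression structure via the second difference $r_1 - 2r_2 + r_3$: this is an integer of absolute value at most $4c<1$, hence zero, after which $r_2 - r_1 = (r_3 - r_1)/2$ gives $\|Ax\|\le c$ in one line. Your approach is shorter, avoids all casework, and makes transparent why $1/4$ is exactly the right threshold (the coefficient $4=1+2+1$ coming from the second-difference bound); the paper's approach is more hands-on and uses only unsigned fractional parts, at the cost of length and a less visible role for the hypothesis $c<1/4$.
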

\begin{proof}
Supposing the contrary we have $||(A + B)x|| \leq c, ||(2A + B)x|| \leq c,  ||(3A + B)x|| \leq c.$ Let $a = \{Ax\}$ and $b = \{(A + B)x\}$. The assumptions in the lemma ensure that $c < a < 1 - c$ and $0 < b < 1$.
We consider four cases:

Case 1: $a < 1/2$ and  $b < 1/2$.\\
We have that $||(A + B)x|| = b$ and $0 < b < c$. Consequently $\{(2A + B)x\} = a + b$ and  $c < a + b < 1/2 + c$. If $1/2 \leq a + b < 1/2 + c$ we get $||(2A + B)x|| = 1- \{(2A + B)x\} \geq 1/2 - c > c$ (since $c < 1/4$), leading to a contradiction. If $1/2 \geq a + b > c$ we get $||(2A + B)x|| = \{(2A + B)x\}$ and $||(2A + B)x|| > c$, leading to a contradiction. \\

Case 2: $a > 1/2$ and  $b > 1/2$.\\
We have that $||(A + B)x|| = 1 -  b$ and $0 <1 - b \leq c \iff 1 - c \leq b < 1$. Consequently  $\{(2A + B)x\} = a + b - 1$ and $2 - c > a + b > 1/2 + (1 - c) > 1 + c $ (since $c < 1/4$). Then we get that $1-c > \{(2A + B)x\} > c$ or $||(2A + B)x|| > c$, which is a contradiction.\\

Case 3: $a > 1/2$ and  $b < 1/2$.\\
We have that $||(A + B)x|| = b$ and $0 < b < c$. Since $b < c$ and $1/2 < a < 1 - c$, we know that $\{(2A + B)x\} = a + b$ and thus $c \geq ||(2A + B)x|| = 1 - \{(2A + B)x\} =1 - a - b > 0$. Consequently $\{(3A + B)x\} = 2a + b -1$ and from $a >  2a + b - 1 \geq a - c$ we get $1- c > a > \{(3A + B)x\} \geq a - c > 1/2 -c > c$ . This implies $||(3A + B)x|| > c$ - contradiction.\\

Case 4: $a < 1/2$ and  $b > 1/2$.\\
We have that $||(A + B)x|| = 1 -  b$ and $1 - c \leq b < 1$. Consequently as $a > c$ we know $c \geq ||(2A + B)x|| = \{(2A + B)x\} = a + b - 1 > 0$ and $\{(3A + B)x\} = 2a + b - 1 > a$. Then we have $1 - c > 1/2 + c> a + c \geq 2a + b - 1 = \{(3A + B)x\} > a > c$, hence $||(3A + B)x|| > c$.\\

Since we reached a contradiction in all of the above cases, which are mutually exclusive and exhaustive we conclude the statement of the Lemma.

\end{proof}

\section{Proof of Proposition \ref{prop2}}

We consider the following construction. Let $[0,1] = E_0 \supset E_1 \supset E_2...$ be a decreasing sequence of sets with $E_k$ a union of a finite number of disjoint closed intervals (called {\em $k$-th level basic intervals}), with each interval of $E_k$ containing at least two intervals of $E_{k+1}$ and the maximum length of $k$-th level intervals tending to $0$ as $k\rightarrow \infty$. In addition, we define $F$ as
$$F = \cap_{n =0}^{\infty} E_n.$$

\begin{lemma}\label{Fal46}
With the construction described above suppose in addition that each $(k-1)$-th level interval contains at least $m_k\geq2$ $k$-th leavel intervals $(k = 1,2,...)$, which are seperated by gaps of size at least $\epsilon_k$, where $0 < \epsilon_{k+1} < \epsilon_k$. Then
$$ \dim_H F \geq \liminf \frac{\log(m_1 \cdots m_{k-1})}{-\log(m_k\epsilon_k)}.$$
\end{lemma}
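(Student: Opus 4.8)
The plan is to establish the lower bound through the Mass Distribution Principle (see \cite{Falconer}): if $\mu$ is a Borel measure with $0<\mu(F)<\infty$ and there are constants $c,r_0>0$ with $\mu(B)\le c\,|B|^{s}$ for every interval $B$ of length at most $r_0$, then $\mathcal{H}^{s}(F)\ge \mu(F)/c>0$, and hence $\dim_H F\ge s$. So the whole task reduces to constructing a suitable measure on $F$ and estimating its values on short intervals.

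First I would build the mass distribution. Assign mass $1$ to $E_0=[0,1]$ and define $\mu$ recursively: if a $(k-1)$-st level interval $I$ has been assigned mass $\mu(I)$ and contains $N_I\ge m_k$ of the $k$-th level intervals, split $\mu(I)$ equally among them; this determines a Borel probability measure supported on $F$ (standard; see \cite{Falconer}), and by construction every $k$-th level interval $J$ satisfies $\mu(J)\le (m_1m_2\cdots m_k)^{-1}$. Two elementary observations are needed. First, any two distinct $k$-th level intervals lie in distinct $(j+1)$-st level intervals for some $j<k$, hence are separated by a gap of length $\ge \epsilon_{j+1}\ge \epsilon_k$; consequently an interval of length $<\epsilon_k$ meets at most one $k$-th level interval. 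Second, since a $(k-1)$-st level interval has length $\le 1$ and contains $\ge m_k$ $k$-th level intervals with $\ge m_k-1\ge 1$ gaps of length $\ge \epsilon_k$ between them, $m_1\cdots m_{k-1}(m_k-1)\epsilon_k\le 1$, so $m_k\epsilon_k\to 0$; in particular $-\log(m_k\epsilon_k)\to+\infty$, and moreover $\tfrac{\log(m_1\cdots m_{k-1})}{-\log(m_k\epsilon_k)}\le 1+o(1)$, so the $\liminf$ in the statement, which I denote $s_0$, satisfies $s_0\le 1$.

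Now fix a small interval $B$ of length $r$ and let $k$ be the index with $\epsilon_k\le r<\epsilon_{k-1}$. Since $r<\epsilon_{k-1}$, $B$ meets only one $(k-1)$-st level interval, so $\mu(B)\le (m_1\cdots m_{k-1})^{-1}$. All $k$-th level intervals meeting $B$ lie inside that one $(k-1)$-st level interval, and consecutive ones are $\ge\epsilon_k$ apart; if $B$ meets $j$ of them, the span between the two extreme ones is simultaneously $\ge (j-1)\epsilon_k$ and $\le r$, so $j\le 1+r/\epsilon_k\le 2r/\epsilon_k$ (using $\epsilon_k\le r$), whence $\mu(B)\le \tfrac{2r}{\epsilon_k}\cdot\tfrac{1}{m_1\cdots m_k}$. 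Combining the two bounds via $\min(1,t)\le t^{s}$ (valid for $t>0$, $0\le s\le 1$) gives, for any $s\in(0,s_0)$,
$$\mu(B)\;\le\;\frac{1}{m_1\cdots m_{k-1}}\,\min\!\left(1,\frac{2r}{\epsilon_k m_k}\right)\;\le\;\frac{(2r)^{s}}{(m_1\cdots m_{k-1})\,(m_k\epsilon_k)^{s}}.$$
By definition of $s_0$, for all $k$ beyond some $k_0$ one has $\log(m_1\cdots m_{k-1})\ge s\bigl(-\log(m_k\epsilon_k)\bigr)$, i.e. $(m_1\cdots m_{k-1})(m_k\epsilon_k)^{s}\ge 1$, and therefore $\mu(B)\le 2^{s}r^{s}$ whenever $r<\epsilon_{k_0-1}$ (which forces the associated index to be $\ge k_0$). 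The Mass Distribution Principle then yields $\dim_H F\ge s$, and letting $s\uparrow s_0$ completes the proof.

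The step I expect to be the crux is the two-sided estimate on $\mu(B)$: one must pick the correct $k=k(r)$ — the level at which the separation hypothesis is "just" lost — verify cleanly that distinct basic intervals of a given level are genuinely separated by the stated gap (this is where nestedness together with $\epsilon_{k+1}<\epsilon_k$ enters), and convert the crude count "number of $k$-th level intervals met $\le 1+r/\epsilon_k$" into an honest power $r^{s}$ by interpolating against the competing bound $(m_1\cdots m_{k-1})^{-1}$. The construction of $\mu$, the limit $m_k\epsilon_k\to 0$, and the final appeal to the Mass Distribution Principle are all routine.
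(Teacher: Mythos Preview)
The paper does not actually prove this lemma; it simply cites Example~4.6 on p.~64 of Falconer \cite{Falconer}. Your argument via the Mass Distribution Principle is exactly Falconer's proof of that example and is carried out correctly (the one slightly loose sentence is the justification of $m_1\cdots m_{k-1}(m_k-1)\epsilon_k\le 1$, but the needed facts $m_k\epsilon_k\to 0$ and $s_0\le 1$ follow at once from your first observation that all $k$-th level intervals are $\ge\epsilon_k$ apart, giving $(m_1\cdots m_k-1)\epsilon_k\le 1$).
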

\begin{proof}
This is Example 4.6, pp.64 in \cite{Falconer}.
\end{proof}

{\bf Step 1.} {\em An informal description of apporach.}\\
Before we go to the actual proof of Proposition \ref{prop2} we give an overview of our approach. Our aim is to construct collections $E_k$ of disjoint closed intervals in the notation of Lemma \ref{Fal46}, which will be appropriate subsets of the $k$-th order fundamental intervals, whose intersection is contained in $A(x)$. We will desire three properties from our chosen intervals:

\begin{enumerate}
\item The first $k$ elements of the continued fraction expansion of the numbers in an interval of $E_k$ (which are all the same by our remark in the previous section), are all bounded by a previously chosen constant $2M$, independent of $k$.
\item The first $k$ denominators of convergents of the numbers in any interval of $E_k$ (which are all the same by our remark in the previous section), all satisfy the condition $||q_nx|| > c$, for $n \leq k$. Here $c$ is some positive constant independent of $k$ and $n$.
\item For each number $y \in E_k$, we have that $||q_n(x)y|| > c$, for all $n$ satisfying $c^{1/2}M^{2(k-1)} \leq q_n(x) < c^{1/2}M^{2k}$. 
\end{enumerate}
The first condition, ensures that all the numbers in $\cap_k E_k$ are badly approximable. The second condition ensures that if $y \in \cap_k E_k$, then $||q_n(y)x|| > c$ for all $n \in \mathbb{N}$. The third condition ensures that if $y \in \cap_k E_k$, then $||q_n(x)y|| > c$ for all $n \in \mathbb{N}$. 

Finally, since $x,y$ are badly approximable we have that $q_n(x)||q_n(x)x|| > c(x)$ and $q_n(y)||q_n(y)y|| > c(y)$, for some constants $c(x), c(y) > 0$, which implies that $q_n(x)||q_n(x)x||||q_n(x)y|| > c(x)\cdot c$ and $q_n(y)||q_n(y)x||||q_n(y)y|| > c(y)\cdot c$. In particular, setting $c(x,y) = c \cdot \min\{c(x), c(y)\}$, we obtain that $(x,y) \in A(x)$, for all $y \in \cap_k E_k$. Consequently, any lower bound on the dimension of $\cap_k E_k$ will naturally hold for $A(x)$.\\

{\bf Step 2.} {\em Fixing the constants $c$ and $M$} \\
Fix $x \in {\bf Bad}$. We know (recall equation (\ref{lac})) that there exists a constant $\lambda(x) > 1$, such that $q_n(x)/q_{n-1}(x) \geq \\lambda$ for $n \geq 1$. In the following discussion all constants will depend on $x$, so we will omit it from our notation. \\

Let $M$ satisfy the following conditions:
\begin{equation}\label{M}
\begin{aligned}
&1.& M/32 >& 2\log_\alpha(M) + 1. \\
&2.& M >& 128.
\end{aligned}
\end{equation}

Next let $c(M)$ satisfy the following conditions:
\begin{equation}\label{c}
\begin{aligned}
&1.& &3c^{1/2}M^4 < 1. \\
&2.& &||a x|| > c, \mbox{ for all } 1 \leq a \leq M^2. \\
&3.& &c < 1/4. 
\end{aligned}
\end{equation}

The following definition will be useful in the next steps.
\begin{definition}
We shall call a fundamental interval $I(a_1,...,a_n)$ ``nice" if for all $1 \leq k \leq n$ one has that $||q_k x|| > c$, where $q_k$ is the $k$-th associated denominator of $I(a_1,...,a_n)$.
\end{definition}

{\bf Step 3.} {\em Properties of the sets $E_k$} \\
We construct the intervals $E_k$ by induction on $k$, with $E_0 = [0,1]$, and $E_k$ satisfying the following properties for $k \geq 1$:

\begin{enumerate}
\item Each interval in $E_k$ is a fundamental interval of order $k$ of the form $I(a_1,...,a_k)$, with with $a_i \leq 2M$, and $M^{k-1} \leq q_k < M^{k}$.
\item Each interval in $E_k$  is contained in a unique interval in $E_{k-1}$.
\item Each interval in $E_k$ is a ``nice" interval.
\item Every two intervals in $E_k$, contained in the same interval in $E_{k-1}$ are separated by a gap of size at least $\epsilon_k = M^{-(2k+3)}$.
\item If $y \in E_k$, then $||yq_n(x)|| > c$ for all $n \geq 0$, such that $c^{1/2}M^{2k} \leq q_n(x) < c^{1/2} M^{2(k + 1)}$.
\item Each interval in $E_{k-1}$ contains at least $m_k > M/32$ intervals from $E_k$.
\end{enumerate}

{\bf Step 4.} {\em Base case: $k = 1.$} \\
We first consider the fundamental intervals $I(a_1)$, with $1 \leq a_1 < M$ and call this collection $A_1$. The latter collection satisfies conditions 1 and 2. In addition, by our choice of $c$ in (\ref{c}) we have that  each of the $M-1$ considered intervals is ``nice", hence condition 3 holds. \\

The $M-1$ intervals considered decompose the interval $\big[\frac{1}{M}, 1\big]$ into $M-1$ intervals of the form $\big[\frac{1}{k + 1}, \frac{1}{k}\big]$ for $k = 1 ,\cdots M-1$. The length of the interval $\big[\frac{1}{k + 1}, \frac{1}{k}\big]$ is $\frac{1}{k(k+1)} > 1/M^2$, hence if we take every other of the $M-1$ intervals (i.e. the $1$-st, $3$-rd, $5$-th etc.), we will have at least $M/4$ intervals, separated by gaps of size at least $1/M^2$. Call the latter collection $B_1$ and then it is clear that every two intervals in $B_1$, are separated by a gap of size at least $\epsilon_1$, hence condition 4 holds.\\

From our choice of $c$ in (\ref{c}), we have that  $c^{1/2} M^{2(1 + 1)} < 1$, hence there are no $n$ such that $c^{1/2}M^{2} \leq q_n(x) < c^{1/2} M^{2(1 + 1)}$. This implies that $B_1$ satisfies condition 5 and since we showed that it contains at least $M/4$ intervals it satisfies condition 6. Setting $E_1:= B_1$, we establish the base case of our induction.\\

{\bf Step 5.} {\em Obtaining $E_{k+1}$ from $E_k$.} \\

Suppose that we have constructed $E_k$ and we wish to construct $E_{k+1}$. Take any interval in $I$ in $E_k$. We will construct a set of intervals $E_{k+1}^I$, contained in $I$, which satisfy conditions 1 through 6. Consequently $E_{k+1} = \cup_{I \in E_k} E_{k+1}^I$ will be our desired set. The main strategy is to start with a big set of candidate intervals for $E_{k+1}^I$, which we will refine to make sure that the remaining intervals satisfy the aforementioned conditions.\\

{\bf Step 6.} {\em The set $A_{k+1}^I$} \\
We aim to construct a set of intervals $A_{k+1}^I$, contained in $I$, satisfying conditions 1 and 2. By induction hypothesis we know that $I$ is of the form $I(a_1,...,a_k)$. Consider all fundamental intervals $I(a_1,...,a_k, s)$ and let the $k+1$-th order denominator associated with $I(a_1,...,a_k, s)$ be $q_{k+1,s}$. Let $A_{k+1}^I$ be the set of $I(a_1,...,a_k, s)$, which satisfy $s \leq 2M$, and $M^{k} \leq q_{k+1,s} < M^{k+1}$. We already know that $a_i < 2M$, by induction hypothesis, hence $A_{k+1}^I$ satisfies conditions 1 and 2.\\

We next wish to estimate the size of $A_{k+1}^I$. We have that $q_{k+1,s} = sq_{k} + q_{k-1}$. In particular, the denominators $q_{k+1,s}$ form an arithmetic progression, starting with $q_k + q_{k-1}$ and step $q_k$. From $q_{k-1} < M^{k-1} \leq q_{k} < M^{k}$, we know that $q_{k-1} + q_{k} < 2M^{k}$. So let $s_0$ be the smallest integer, such that  $M^{k} \leq s_0q_{k} + q_{k-1} < M^{k+1}$. From the inequalities for $q_k$, we know that $1 \leq s_0 \leq M$. In addition, we have by the minimality of $s_0$ that $(s_0 - 1)q_{k} + q_{k-1} < M^k$, which implies that $ s_0q_{k} + q_{k-1} < 2M^{k}$. The latter in particular implies that $ M^{k} \leq (s_0 + i)q_{k} + q_{k-1} < M^{k+1}$ for $i = 1,\cdots,[M/2]$. Thus we find that $A_{k+1}$ contains at least $M/2$ fundamental $k+1$-order intervals in $I$. \\ 

{\bf Step 7.} {\em The set $B_{k+1}^I$} \\

We denote by $B_{k+1}^I$ the subset of intervals in $A_{k+1}^I$ that are ``nice". Consequently the set $B_{k+1}^I$ will satisfy conditions $1,2,3$. Our main task in this step is to find a lower bound on the size of $B_{k+1}^I$.\\

Any interval in $A_{k+1}^I$ is of the form $I(a_1,...,a_k,s)$ and we already know that $q_i$ for $i \leq k$ satisfies $||q_kx|| > c$. So in particular to show that an interval in $A_{k+1}^I$ is ``nice", it is enough to show that $||q_{k+1,s}x|| >c$. We have that the numbers $q_{k+1,s}$ form an arithmetic progression starting from $q_{k+1,s_0}$ and step $q_k$. Since $||q_k x|| > c$, by induction hypothesis, we know from Lemma \ref{spec} that at least one of the following three inequalities holds for each $s$:

$$||sq_k + q_{k-1} x|| > c \hspace{5mm} ||(s + 1)q_k + q_{k-1} x|| > c \hspace{5mm} ||(s + 2)q_k + q_{k-1} x|| > c.$$

The latter implies that from each three consecutive intervals in $A^I_{k+1}$ at least one is ``nice". Since $A_{k+1}^I$ contains at least $M/2$ intervals, we conclude that among those at least $M/8$ are nice. Hence the size of $B_{k+1}^I$ is at least $M/8$.\\

{\bf Step 8.} {\em The set $C_{k+1}^I$} \\

We next wish to extract a set $C_{k+1}^I$ from $B_{k + 1}^I$, which satisfies condition 4. We have that each interval in $B_{k + 1}^I$  has size $\frac{1}{q_{k+1,s}(q_{k+1,s} + q_{k})}$, which by the inequalities for $q_{k+1,s}$ and $q_k$ is at least $\frac{1}{M^{k + 1}(M^{k+1} + M^k)} > \frac{1}{M^{2k+3}}$. Thus if we remove every other interval in $B_{k+1}^I$, we will be left with at least $M/16$, ``nice" intervals separated by distance at least $\epsilon_{k+1}$. Call the latter collection of intervals $C_{k+1}^I$.\\

{\bf Step 9.} {\em The set $E_{k+1}^I$} \\

In this final step we wish to extract a collection of intervals from $C_{k+1}^I$ that satisfies condition $5$. Firstly, for any number $q$, the set of points that satisfies $||qy|| \leq c$ is the set of intervals of the form $[\frac{p - c}{q}, \frac{p + c}{q}]$. These intervals are separated by distance $1/q$ and have length $2c/q$. In our case we wish to find intervals in $C_{k+1}^I$  that avoid all such intervals for $q = q_n(x)$, where $n$ satisfies $c^{1/2}M^{2k} \leq q_n(x) < c^{1/2} M^{2(k + 1)}$. 

\begin{claim} 
For each $q$ such that $c^{1/2}M^{2k} \leq q < c^{1/2} M^{2(k + 1)}$ we have that at most one interval $[\frac{p - c}{q}, \frac{p + c}{q}]$ intersects $I$.
\end{claim}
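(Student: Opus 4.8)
The plan is to compare the length of the fundamental interval $I$ with the spacing of the ``bad'' intervals $\big[\frac{p-c}{q},\frac{p+c}{q}\big]$, and to show that $I$ is simply too short to reach from one such interval to the next. First I would recall that, since $I \in E_k$, it is a fundamental interval of order $k$ of the form $I(a_1,\dots,a_k)$ whose associated denominators satisfy $q_{k-1} < M^{k-1} \le q_k < M^k$ by property 1 of Step 3. Its length is exactly $|I| = \frac{1}{q_k(q_k + q_{k-1})} \le \frac{1}{q_k^2} \le M^{-2(k-1)} = M^{2-2k}$.

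Next I would analyze the geometry of the bad intervals. The interval $\big[\frac{p-c}{q},\frac{p+c}{q}\big]$ is centered at $p/q$ and has length $2c/q$; since $c < 1/4$ by (\ref{c}), distinct such intervals are pairwise disjoint, and consecutive centers are $1/q$ apart. Suppose $I$ met two of them, say the ones around $p/q$ and $p'/q$ with $p < p'$. Because $I$ is an interval, it would contain the whole segment joining a point of $I \cap \big[\frac{p-c}{q},\frac{p+c}{q}\big]$ to a point of $I \cap \big[\frac{p'-c}{q},\frac{p'+c}{q}\big]$; the shortest such segment has length at least $\frac{p'-c}{q} - \frac{p+c}{q} = \frac{p'-p-2c}{q} \ge \frac{1-2c}{q}$. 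Hence we would have $|I|\cdot q \ge 1-2c$.

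Finally I would obtain a contradiction from the range of $q$. Since $q < c^{1/2}M^{2(k+1)} = c^{1/2}M^{2k+2}$, combining with the length bound gives $|I|\cdot q < M^{2-2k}\cdot c^{1/2}M^{2k+2} = c^{1/2}M^4$. By condition 1 of (\ref{c}) we have $c^{1/2}M^4 < 1/3$, while $1-2c > 1/2$ because $c < 1/4$; therefore $|I|\cdot q < 1/3 < 1/2 < 1-2c$, contradicting the inequality of the previous paragraph. Thus at most one interval $\big[\frac{p-c}{q},\frac{p+c}{q}\big]$ can intersect $I$, which is the claim.

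I do not expect a genuine obstacle here: the argument is an elementary length-versus-spacing estimate, and the only care needed is bookkeeping the exponents of $M$ and verifying that the constants fixed in (\ref{M}) and (\ref{c}) make the numerical inequality $c^{1/2}M^4 < 1-2c$ valid — which it is, since $3c^{1/2}M^4 < 1$ and $c < 1/4$.
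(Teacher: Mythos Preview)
Your proof is correct and follows essentially the same approach as the paper: both argue by contradiction, bounding $|I|$ above by $M^{-2(k-1)}$ and showing this is too small compared to the spacing $1/q$ of the bad intervals. The paper uses the triangle inequality on the centers to get $1/q \le 4c/q + |I|$, whereas you directly bound the gap between consecutive bad intervals by $(1-2c)/q$ and compare to $|I|$; these are minor rearrangements of the same length-versus-spacing estimate.
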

\begin{proof}
Suppose the contrary, then there are two intervals $I_1 = [\frac{p_1 - c}{q}, \frac{p_1 + c}{q}]$ and $I_2 = [\frac{p_2 - c}{q}, \frac{p_2 + c}{q}]$, which intersect $I$. Consequently there exist $a,b \in I$ such that $a \in I_1$ and $b \in I_2$.\\

We firstly have that $|p_1/q - p_2/q| \geq 1/q >  c^{-1/2} M^{-2(k + 1)}$ since $p_1 \ne p_2$ by assumption. On the other hand, by the triangle inequality we know that 
$$|p_1/q - p_2/q| \leq |p_1/q - a| + |b - a| + |p_2/q - b| \leq |I_1| + |I| + |I_2| = 4c/q + |I|.$$

We also have that $I = \big[\frac{p_k}{q_k}, \frac{p_k + p_{k-1}}{q_k + q_{k-1}} \big]$, hence $|I| = \frac{1}{q_k(q_k + q_{k-1})}$. Since $q_k > M^{k-1}$, we conclude that $|I| < 1/M^{2(k-1)}$. Combining this with the equations above and the inequalities for $q$ we get:

 $$c^{-1/2} M^{-2(k + 1)} < 1/q \leq |p_1/q - p_2/q| \leq 4c/q + |I| \leq 4c/c^{1/2}M^{2k} + 1/M^{2(k-1)} < 3/M^{2(k-1)}.$$

Consequently we get $3c^{1/2}M^4 > 1$, but by our choice for $c$ we have $1 > 3c^{1/2}M^4$, which is a contradiction. This concludes the proof of the claim.
\end{proof}

\begin{claim} 
For each $q$ such that $c^{1/2}M^{2k} \leq q < c^{1/2} M^{2(k + 1)}$ we have that the interval $[\frac{p - c}{q}, \frac{p + c}{q}]$ intersects at most one interval in $C_{k+1}^I$.
\end{claim}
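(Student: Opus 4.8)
The plan is to show that each ``bad'' interval $J_{p,q} = [\frac{p-c}{q}, \frac{p+c}{q}]$ for $q$ in the stated range is short compared to both the length of a $(k+1)$-st level interval in $C_{k+1}^I$ and to the gap separating two such intervals, so it simply cannot reach across from one interval of $C_{k+1}^I$ to another. First I would record the three relevant lengths: each interval in $C_{k+1}^I$ is a fundamental interval of order $k+1$, hence by the induction hypothesis (condition 1) its associated denominator $q_{k+1}$ satisfies $M^k \le q_{k+1} < M^{k+1}$, so its length is $\frac{1}{q_{k+1}(q_{k+1}+q_k)} > \frac{1}{M^{k+1}(M^{k+1}+M^k)} > M^{-(2k+3)}$; the gap between any two intervals of $C_{k+1}^I$ lying in $I$ is at least $\epsilon_{k+1} = M^{-(2k+3)}$ by the construction in Step 8; and the length of $J_{p,q}$ is $\frac{2c}{q} \le \frac{2c}{c^{1/2}M^{2k}} = 2c^{1/2}M^{-2k}$.

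Next I would combine these estimates. If a single interval $J_{p,q}$ met two distinct intervals $D_1, D_2 \in C_{k+1}^I$, then $J_{p,q}$ would have to contain the entire gap between $D_1$ and $D_2$ together with at least a touching portion of each; in particular its length would be at least the gap size $\epsilon_{k+1} = M^{-(2k+3)}$. So it suffices to check that $2c^{1/2}M^{-2k} < M^{-(2k+3)}$, i.e.\ $2c^{1/2}M^3 < 1$. This follows from condition 1 in (\ref{c}), which gives $3c^{1/2}M^4 < 1$, hence $2c^{1/2}M^3 < 3c^{1/2}M^4 < 1$ (using $M > 1$). This contradiction establishes the claim. (I should note that the previous claim, that at most one $J_{p,q}$ meets $I$ for each fixed $q$, is not logically needed here but makes the bookkeeping in Step 9 cleaner, since it means the total number of intervals of $C_{k+1}^I$ killed across the whole range of $q$ is at most the number of integers $q$ in $[c^{1/2}M^{2k}, c^{1/2}M^{2(k+1)})$.)

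I do not expect a genuine obstacle here; the argument is a short pigeonhole-by-length-comparison. The only point requiring a little care is making the geometric claim precise — spelling out that ``$J_{p,q}$ intersects $D_1$ and $D_2$'' forces $J_{p,q}$ to span the interstitial gap, which uses that $D_1$ and $D_2$ are \emph{disjoint closed} intervals separated by a gap of length $\ge \epsilon_{k+1}$, so that any connected set meeting both has diameter $\ge \epsilon_{k+1}$. After that it is purely the inequality $2c^{1/2}M^3 < 1$ coming from (\ref{c}). Finally I would remark how this claim feeds into Step 9: since each of the boundedly many ``bad'' $q$ in the range can eliminate at most one interval of $C_{k+1}^I$, and $C_{k+1}^I$ started with at least $M/16$ intervals, after removing all eliminated intervals one is left with at least $M/16 - (\text{number of bad } q) \ge M/16 - c^{1/2}M^{2(k+1)}$ — and one then checks, using condition 1 of (\ref{c}) again (which forces $c^{1/2}$ to be extremely small relative to $M$), that this remaining count still exceeds $M/32$, yielding condition 6 for $E_{k+1}^I$.
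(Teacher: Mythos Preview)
Your proof of the claim itself is correct and essentially identical to the paper's: both argue by contradiction that if $J=[\frac{p-c}{q},\frac{p+c}{q}]$ meets two distinct intervals of $C_{k+1}^I$, then $|J|\ge \epsilon_{k+1}=M^{-(2k+3)}$, while $|J|=2c/q\le 2c^{1/2}M^{-2k}$, giving $2c^{1/2}M^3\ge 1$, which contradicts $3c^{1/2}M^4<1$.

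Your closing remark about how this feeds into Step 9, however, is wrong and would derail the argument. You bound the number of ``bad'' $q$ by the number of \emph{integers} in $[c^{1/2}M^{2k},\,c^{1/2}M^{2(k+1)})$, and then write $M/16-c^{1/2}M^{2(k+1)}$. That count grows without bound in $k$, so for all large $k$ it exceeds $M/16$ and your lower bound is negative; no choice of $c$ depending only on $M$ can rescue this uniformly in $k$. The paper does not count all integers in the range: condition 5 only concerns $q=q_n(x)$, the convergent denominators of $x$. Because $x\in\mathbf{Bad}$, the sequence $\{q_n(x)\}$ is lacunary with ratio $\ge\lambda>1$, and the paper's third claim in Step 9 uses this to bound the number of such $q_n(x)$ in the window by $2\log_\lambda(M)+1$, independently of $k$. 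That lacunarity bound, together with condition 1 of (\ref{M}), is what makes $M/16-(2\log_\lambda(M)+1)>M/32$ work for every $k$.
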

\begin{proof}
Suppose the contrary, then there exist an iterval $J = [\frac{p - c}{q}, \frac{p + c}{q}]$ and two intervals $I_1, I_2 \in C_{k+1}^I$, such that $a \in J \cap I_1$ and $b \in J \cap I_2$.\\

Since $a,b \in J$, we know that $|a-b| \leq |J| = 2c/q \leq 2c^{1/2}M^{-2k}$. On the other hand, we know that $I_1$ and $I_2$ are separated by a distance at least $M^{-(2k + 3)}$, hence we obtain the inequalities:

$$2c^{1/2}M^{-2k} \geq |a-b| \geq M^{-(2k + 3)}.$$

The above implies that $2c^{1/2}M^3 \geq 1$, but from our choice of $c$ we have $2c^{1/2}M^3 < 3c^{1/2}M^4 \leq 1$, which is our desired contradiction.
\end{proof}

The above two claims show that for each $q_n(x)$ satisfying $c^{1/2}M^{2k} \leq q_n(x) < c^{1/2} M^{2(k + 1)}$, at most one interval $[\frac{p - c}{q_n(x)}, \frac{p + c}{q_n(x)}]$ intersects $I$ and if it does, then it intersects at most one interval in $C_{k+1}^I$. Let's denote by $S = \{ q_n(x) | c^{1/2}M^{2k} \leq q_n(x) < c^{1/2} M^{2(k + 1)}\}$ and by $T_{k+1}^I$ the set of intervals in $C_{k+1}^I$, which violate condition $5$. From the above discussion we have that $|T_{k+1}^I| \leq |S|$.

\begin{claim} 
$|S| \leq 2\log_{\lambda}(M) + 1$
\end{claim}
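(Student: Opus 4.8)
The plan is to exploit the lacunarity of the denominator sequence of $x$, which is exactly the property recorded in Step 2. Recall that we fixed a constant $\lambda = \lambda(x) > 1$ with $q_n(x)/q_{n-1}(x) \geq \lambda$ for every $n \geq 1$; in particular the sequence $\{q_n(x)\}_{n\geq 0}$ is strictly increasing, since $q_n(x) \geq \lambda\, q_{n-1}(x) > q_{n-1}(x)$. The first observation I would make is that, because $\{q_n(x)\}$ is strictly increasing, the set of indices $n$ for which $q_n(x)$ lies in the half-open window $[c^{1/2}M^{2k}, c^{1/2}M^{2(k+1)})$ is a block of consecutive integers. Writing $s := |S|$, we may therefore enumerate this block as $\{m, m+1, \dots, m+s-1\}$, so that $S = \{q_m(x), q_{m+1}(x), \dots, q_{m+s-1}(x)\}$. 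If $s \leq 1$ the asserted bound is immediate because $2\log_\lambda M + 1 \geq 1$, so assume $s \geq 2$.

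Next I would chain the lacunarity estimate along this block. Telescoping the ratios gives
$$\frac{q_{m+s-1}(x)}{q_m(x)} = \prod_{j=m+1}^{m+s-1} \frac{q_j(x)}{q_{j-1}(x)} \geq \lambda^{\,s-1}.$$
On the other hand, membership in $S$ forces $q_m(x) \geq c^{1/2}M^{2k}$ and $q_{m+s-1}(x) < c^{1/2}M^{2(k+1)}$, so the same ratio satisfies
$$\frac{q_{m+s-1}(x)}{q_m(x)} < \frac{c^{1/2}M^{2(k+1)}}{c^{1/2}M^{2k}} = M^2.$$
Combining the two displays yields $\lambda^{\,s-1} < M^2$, and taking logarithms in base $\lambda$ (legitimate since $\lambda > 1$) gives $s - 1 < 2\log_\lambda M$, hence $|S| = s < 2\log_\lambda M + 1$, which is even slightly stronger than the claimed inequality.

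I do not expect any real obstacle here; the argument is a one-line pigeonhole once the right framing is in place. The only points that require a moment of care are that the window is half-open, so that the strict inequality $q_{m+s-1}(x) < c^{1/2}M^{2(k+1)}$ is genuine, and that the indices contributing to $S$ are consecutive, which is precisely what strict monotonicity of $\{q_n(x)\}$ guarantees. It is worth noting why this is the estimate we want: it says that at most $O(\log M)$ convergent denominators of $x$ fall in any window whose endpoints differ by the fixed factor $M^2$, which is the quantitative form of $\{q_n(x)\}$ being lacunary, and it is exactly this bound that lets us control $|T^I_{k+1}|$ and push the counting through Lemma \ref{Fal46} in the remaining steps.
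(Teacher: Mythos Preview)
Your proof is correct and follows essentially the same approach as the paper: both arguments list the elements of $S$ in increasing order, telescope the lacunarity inequality $q_{n}(x)/q_{n-1}(x)\geq\lambda$ to obtain $\lambda^{s-1}\leq q_{\text{last}}/q_{\text{first}} < M^2$, and then take logarithms. Your additional observation that the contributing indices form a consecutive block is true but not strictly needed, since $n_{i+1}>n_i$ already forces $q_{n_{i+1}}(x)/q_{n_i}(x)\geq\lambda$.
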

\begin{proof}
Let $|S| = s$ and $c^{1/2}M^{2k} \leq q_{n_1}(x) < q_{n_1}(x) < \cdots <  q_{n_s}(x) < c^{1/2} M^{2(k + 1)}$ be the elements of $S$ in ascending order. We have by the definition of $\lambda$ that $\frac{q_{n_{i + 1}}}{q_{n_{i}}} \geq \lambda$, for all $i = 1,...,s$, hence 

$$c^{1/2} M^{2(k + 1)} > q_{n_s}(x) \geq \lambda^{s-1}q_{n_1}(x) \geq \alpha^{s-1} c^{1/2}M^{2k}$$
Consequently $M^2 \geq \lambda^{s-1}$ or $2\log_{\lambda}(M) \geq s - 1$, which concludes the proof of the claim.
\end{proof}

From the above claim and the fact that $C_{k+1}^I$ contains at least $M/16$ intervals, we conclude that there is a subset of $C_{k+1}^I$ of size at least $M/16 - 2\log_{\lambda}(M) - 1$, satisfying condition $5$. Call the latter subset $E_{k+1}^I$. Then from our choice of $M$ we know that $M/16 - 2\log_{\lambda}(M) - 1 > M/32$, and so $E_{k+1}^I$ contains at least $M/32$ intervals, which shows $E_{k+1}^I$ satisfies conditions $1$ through $6$. We thus set $E_{k+1} = \cup_{I \in E_k} E_{k+1}^I$ and then we have our construction for the case $k +1$. The general result now follows by induction.\\

{\bf Step 10.} {\em $\cap_kE_{k} \subset A(x)$} \\

We now wish to show that $\cap_kE_{k} \subset A(x)$. Indeed, let $y \in \cap_kE_{k}$. We first remark that from condition $1$ each element of the continued fraction expansion of $y$ is bounded by $2M$, hence $y \in {\bf Bad}$. Since $x \in {\bf Bad}$ by assumption we know that $(x,y) \in {\bf Bad}^2$. Consequently there exist positive constants $c(x), c(y)$ such that  $n||ny|| > c(y)$ and $n||nx|| > c(x)$ for all $n \in \mathbb{N}$.

We know that each $q_n(x)$ satisfies $c^{1/2}M^{2k} \leq q_{n}(x) < c^{1/2} M^{2(k + 1)}$ for some $k$ (since $c^{1/2}M^{2} < 1$). Consequently $y \in E_{k}$ implies by condition $5$ that $||yq_n(x)|| > c$, so that $q_n(x)||xq_n(x)||||yq_n(x)|| > c \cdot c(x)$. In addition, for each $n$ we have that $y \in E_n$, hence $y$ belongs to a ``nice" interval of order $n$ and $||q_n(y)x|| > c$, so that $q_n(y)||xq_n(y)||||yq_n(y)|| > c \cdot c(y)$. And we see that if $r$ is a denominator of a convergent of $x$ or $y$, then $r||rx||||ry|| > c(x,y) = c\cdot \min\{c(x),c(y)\}$, i.e. $(x,y) \in A(x)$. Since $y \in \cap_k E_k$ was arbitrary we conclude that $\cap_k E_k \subset A(x)$.\\

{\bf Step 11.} {\em Lower bound on $\dim_H(\cap_k E_k)$} \\

We have from Lemma \ref{Fal46} that 

$$\dim_H(\cap_k E_k) \geq \liminf_{k \rightarrow \infty} \frac{\log(m_1 \cdots m_{k-1})}{-\log(m_k\epsilon_k)} $$
$$ = \liminf_{k \rightarrow \infty}\frac{\log((M/32)^{k-1})}{-\log((M/32)M^{-(2k+3)})} = \liminf_{k \rightarrow \infty}\frac{(k-1)\log(M/32)}{(2k+2)\log(M) + \log(32)} =  \frac{1}{2}\frac{\log(M/32)}{\log(M) }$$

{\bf Step 12.} {\em $\dim_HA(x) \geq 1/2$} \\

Since $\cap_k E_k \subset A(x)$ we have from the previous step that 

$$\dim_HA(x) \geq \frac{1}{2}\frac{\log(M/32)}{\log(M) },$$
where the latter holds for all sufficiently large $M$ (according to our choice). Taking $M \rightarrow \infty$, shows that $\dim_HA(x) \geq 1/2$, which concludes the proof of Proposition \ref{prop2}.

\section{Parallels with the Mixed Littlewood conjecture}
In this section we will translate Questions 1 and 2 from the introduction to the setting of the Mixed Littlewood conjecture (MLC). As will be seen the corresponding problem turns out to be simpler than the original one and one can find a straightforward solution, using the notion of Schmidt games. We recall the setting of MLC below.\\

Let $\mathcal{D} = \{d_k\}_{k\in \mathbb{N}}$ be a sequence of integers, greater than or equal to $2$. Set $t_0= 1$ and, for $n\geq 1$ let
$$t_n = \prod_{k = 1}^n d_k.$$
For $q \in \mathbb{N}$ we define 
$$\omega_{\mathcal{D}}(q) = \sup\{n \in \mathbb{N}: q\in t_n\mathbb{Z}\}$$
and
$$|q|_{\mathcal{D}} = \frac{1}{t_{\omega_{\mathcal{D}}(q)}} = \inf\{\frac{1}{t_n}: q \in t_n\mathbb{Z}\}.$$
When $\mathcal{D}$ is the constant sequence, equal to $p$, where $p$ is prime, then $|\cdot|_{\mathcal{D}}$ is the usual $p$-adic norm. Analogously to Littlewood's conjecture (LC) B. de Mathan and O. Teuli\'{e} proposed in \cite{deMathan04} the following mixed version of the conjecture\\

\setlength{\parindent}{0 pt}
{\bf Mixed Littlewood Conjecture:} For every real number $\alpha$
\begin{equation}\label{MLC}
\inf_{q \geq 1}q\cdot |q|_{\mathcal{D}}\cdot ||q\alpha|| = 0
\end{equation}

\setlength{\parindent}{20 pt}
Similarly to LC, MLC remains an open problem; however, much of the progress that has been made on LC, has managed to be adapted to MLC. For example, one can restrict the cases of interest in MLC to the case when $\alpha \in {\bf Bad}$ (otherwise $q\cdot |q|_{\mathcal{D}}\cdot ||q\alpha|| \leq q||q\alpha||$ would imply (\ref{MLC}) as the right hand side goes to $0$ upon taking infima, when $\alpha \notin {\bf Bad}$). Moreover, it was shown in \cite{Einsiedler} that when $d_k = p$ for all $k$, one has that the set of exceptions of MLC has Hausdorff dimension zero, similarly to the result in \cite{EKL06} for the classical case.\\
\setlength{\parindent}{20 pt}

In the above setting we now ask:
\setlength{\parindent}{0 pt}

 {\em Question 1'.} Does there exist $\alpha \in {\bf Bad}$ such that $\inf_{n \geq 1}q_n(\alpha)\cdot |q_n(\alpha)|_{\mathcal{D}}\cdot ||q_{n}(\alpha)\alpha|| > 0$ and $\inf_{n \geq 1}t_n\cdot |t_n|_{\mathcal{D}}\cdot ||t_n\alpha|| > 0$.\\

\vspace{2mm}

\setlength{\parindent}{0 pt}
 {\em Question 2'.} What is the size of $A \subset {\bf Bad}$,  such that for $\alpha \in A$ one has $\inf_{n \geq 1}q_n(\alpha)\cdot |q_n(\alpha)|_{\mathcal{D}}\cdot ||q_{n}(\alpha)\alpha|| > 0$ and $\inf_{n \geq 1}t_n\cdot |t_n|_{\mathcal{D}}\cdot ||t_n\alpha|| > 0$.\\

\vspace{2mm}

\setlength{\parindent}{20 pt}

Similarly to the questions in the introduction we restrict our attention to the case when $\alpha \in {\bf Bad}$, since that is the most relevant case for MLC. In addition, in view of the fact that the Lebesgue measure of {\bf Bad} is $0$, the set $A$ is also of measure zero, hence our discussion of the size of $A$, will be in terms of its Hausdorff dimension. The following proposition provides a complete answer to Questions 1' and 2' above.

\begin{proposition}\label{propMLC}
Let $\mathcal{D}$ and $\{t_n\}$ be as in the previous subsection. In addition, let 
$$A = \{ \alpha \in {\bf Bad} : \inf_{n \geq 1}q_n(\alpha)\cdot |q_n(\alpha)|_{\mathcal{D}}\cdot ||q_{n}(\alpha)\alpha|| > 0$$
$$ \mbox{ and }\inf_{n \geq 1}t_n\cdot |t_n|_{\mathcal{D}}\cdot ||t_n\alpha|| > 0 \}.$$ 
Then $A$ is $1/2$-winning and in particular $\dim_H A = 1.$
\end{proposition}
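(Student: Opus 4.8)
The plan is to construct a $1/2$-winning subset of $A$ and then appeal to the classical facts (Schmidt) that ${\bf Bad}$ is $1/2$-winning, that any superset of a $1/2$-winning set is $1/2$-winning, and that $1/2$-winning subsets of $\mathbb{R}$ have full Hausdorff dimension. First I would make two reductions. Since every $d_k\geq 2$, no $t_m$ with $m>n$ divides $t_n$, so $\omega_{\mathcal{D}}(t_n)=n$ and $|t_n|_{\mathcal{D}}=1/t_n$; hence the second condition defining $A$ is simply $\inf_{n\geq 1}||t_n\alpha||>0$. For the first condition, if $\alpha\in{\bf Bad}$ has all partial quotients at most $M$, then $\frac{1}{M+2}\leq q_n(\alpha)||q_n(\alpha)\alpha||\leq 1$ for every $n$, so $q_n(\alpha)|q_n(\alpha)|_{\mathcal{D}}||q_n(\alpha)\alpha||$ is comparable to $|q_n(\alpha)|_{\mathcal{D}}=1/t_{\omega_{\mathcal{D}}(q_n(\alpha))}$, and the first condition is equivalent to $\sup_n\omega_{\mathcal{D}}(q_n(\alpha))<\infty$. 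Thus it is enough to exhibit, for suitable fixed integers $M,N$ and a suitable $c>0$, a $1/2$-winning subset of
$$
S := \{\,\alpha\in{\bf Bad}\ :\ a_i(\alpha)\leq 2M\ \ \forall i,\quad q_n(\alpha)\notin t_{N+1}\mathbb{Z}\ \ \forall n,\quad ||t_n\alpha||\geq c\ \ \forall n\,\},
$$
because $S\subseteq A$ (if $q_n(\alpha)\notin t_{N+1}\mathbb{Z}$ for all $n$, then $\omega_{\mathcal{D}}(q_n(\alpha))\leq N$ for all $n$).

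I would obtain such a subset by playing a single Schmidt game in which Alice's strategy combines three avoidance tasks; fix $M$ large, then $N$ with $t_{N+1}>2M$, then $c>0$ small. Consider a round whose board interval $B$ has length $r$. (a) As in Schmidt's proof that ${\bf Bad}$ is $1/2$-winning, there is at most one rational $p/q$ with $q$ in the relevant dyadic window that is dangerous for ${\bf Bad}$, and $B$ meets only a fixed-proportion neighbourhood of it. (b) Since $t_{n+1}/t_n=d_{n+1}\geq 2$, the sequence $\{t_n\}$ is lacunary, so only $O(1)$ indices $n$ are active at scale $r$, each forbidding one subinterval $\{\,||t_n\alpha||<c\,\}$ of $B$ of length $2c/t_n$; and once a point satisfies $||t_n\alpha||\geq c$ it continues to as the nested board shrinks. (c) Alice maintains the invariant that $B$ lies inside a fundamental interval $I(a_1,\dots,a_{n_0})$ with $a_i\leq 2M$, all of whose associated denominators lie outside $t_{N+1}\mathbb{Z}$; when $r$ has shrunk enough for the order-$(n_0+1)$ subintervals to matter, at most one admissible subinterval $I(a_1,\dots,a_{n_0},s)$ with $1\leq s\leq 2M$ is forbidden, because $q_{n_0+1,s}=sq_{n_0}+q_{n_0-1}$ runs through a nonconstant arithmetic progression modulo $t_{N+1}$ and $2M<t_{N+1}$; indeed there is no forbidden one at all when $\gcd(q_{n_0},t_{N+1})>1$, since then $\gcd(q_{n_0},q_{n_0-1})=1$ forces $q_{n_0+1,s}\equiv q_{n_0-1}\not\equiv 0$ modulo that common factor. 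Alice then plays a half-length ball $A\subseteq B$ which avoids the finitely many forbidden neighbourhoods from (a) and (b) and lies inside a maximal run of consecutive good order-$(n_0+1)$ subintervals, thereby advancing the invariant to order $n_0+1$.

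The step I expect to be the real obstacle is verifying that this combined move is always legal — that the forbidden neighbourhoods from (a) and (b) have total length less than $r/2$, and that a half-length ball still fits inside a run of good order-$(n_0+1)$ subintervals. For the latter one uses that the admissible subintervals $I(a_1,\dots,a_{n_0},s)$, $1\leq s\leq 2M$, are consecutive and exhaust all but a proportion $\ll 1/M$ of $I(a_1,\dots,a_{n_0})$, so deleting the at most one forbidden subinterval leaves a run of good subintervals of length a fixed proportion of $|I(a_1,\dots,a_{n_0})|\geq r$; taking $c$ small relative to $M$ then leaves the needed room, uniformly in the round and in the opponent's contraction ratios. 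Granting this, every play of the game produces a point $\alpha=\bigcap_k B_k$ whose partial quotients are at most $2M$ (hence $\alpha\in{\bf Bad}$, with the constant $\frac{1}{M+2}$ above uniform), with $||t_n\alpha||\geq c$ for all $n$, and with $q_n(\alpha)\notin t_{N+1}\mathbb{Z}$ for all $n$; so $\alpha\in S$. Hence $S$, and therefore $A$, is $1/2$-winning, and in particular $\dim_H A=1$.
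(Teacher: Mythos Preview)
Your initial reductions are correct and coincide with the paper's: the second condition on $A$ reduces to $\inf_n\|t_n\alpha\|>0$, and for $\alpha\in{\bf Bad}$ the first reduces to $\sup_n\omega_{\mathcal D}(q_n(\alpha))<\infty$. But the set $S$ you then aim to show is $1/2$-winning is contained in $\{\alpha:a_i(\alpha)\leq 2M\text{ for all }i\}$, and that set has Hausdorff dimension strictly less than $1$ (it is a Cantor-type set; this goes back to Jarn\'{\i}k). Since every $1/2$-winning subset of $\mathbb R$ has Hausdorff dimension $1$, no subset of $S$ can be $1/2$-winning. So the combined single-game strategy you sketch cannot succeed: your task (c), keeping the partial quotients uniformly bounded, is already impossible in a Schmidt game, because Bob can steer the play into the portion of $I(a_1,\dots,a_{n_0})$ covered by subintervals $I(a_1,\dots,a_{n_0},s)$ with $s>2M$.

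The paper avoids this by never restricting the partial quotients. It writes $A={\bf Bad}\cap B\cap C$ with $B=\{\alpha:\inf_n\|t_n\alpha\|>0\}$ and $C=\{\alpha:\inf_n|q_n(\alpha)|_{\mathcal D}>0\}$, and uses that a countable intersection of $1/2$-winning sets is $1/2$-winning. Schmidt's theorem handles ${\bf Bad}$, and the lacunarity of $\{t_n\}$ handles $B$ (Moshchevitin). For $C$ the paper replaces your task (c) by a pure avoidance condition: fixing $M=t_N$, it suffices that $M$ divide no $q_n(\alpha)$, and since convergent denominators $q$ satisfy $q\|q\alpha\|<1$, this holds provided $\alpha$ avoids $S_{iM}=\{x:iM\|iMx\|\leq 1\}$ for every $i\geq 1$. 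One then shows $\bigcap_i(\mathbb R\setminus S_{iM})$ is $1/2$-winning; the key point is that all the dangerous intervals $[\frac{p}{iM}-\frac{1}{(iM)^2},\frac{p}{iM}+\frac{1}{(iM)^2}]$ meeting the current board and lying in the active scale window share a common midpoint (because $|p_1/(i_1M)-p_2/(i_2M)|\geq M/(i_1i_2M^2)$ is too large otherwise), so Alice can sidestep them all with a single half-ball. This is the tractable substitute for your (c).
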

In the proof of Proposition \ref{propMLC} we will use the notion of Schmidt games, introduced by W. Schmidt in \cite{Schmidt66}, which we recall below.\\

The game is played by two players $A$ and $B$ for a given pair of numbers $\alpha, \beta \in (0,1)$, and goes as follows. First $B$ chooses a closed interval ${\bf B_0}$ such that $|{\bf B_0}| = \beta$ in $\mathbb{R}$ (here $|\cdot |$ denotes the length of an interval). After ${\bf B_n}$ has been chosen, $A$ chooses a closed interval ${\bf A_n} \subset {\bf B_n}$ such that $|{\bf A_n}| = \alpha |{\bf B_n}|$. Then $B$ chooses an interval ${\bf B_{n+1}} \subset {\bf A_n}$ such that $|{\bf B_{n+1}}|  = \beta |{\bf A_n}|$.  We obtain the nested sequence ${\bf B_0} \supset {\bf A_0} \supset {\bf B_1} \supset {\bf A_1} \cdots$. Notice that since the diameter of the intervals goes to zero (as $\alpha, \beta \in (0,1)$) and the intervals are closed then $\cap_{n=0}^\infty {\bf A_n} = \cap_{n=0}^\infty {\bf B_n}$ and the intersection is a single point. Now a set $Y \subset \mathbb{R}$ is called $(\alpha, \beta)$-winning if $A$ can play so that $\cap_{n=0}^\infty {\bf A_n} \in Y$, regardless of how $B$ plays. In addition, a set $Y$ is $\alpha$-winning if $Y$ is $(\alpha, \beta)$-winning for all $\beta \in (0,1)$. Schmidt showed in \cite{Schmidt66} that the intersection of countably many $\alpha$-winning sets is again $alpha$-winning, and in addition that an $\alpha$-winning set in the real line has Hausforff dimension one. \\

\begin{proof}(Proposition \ref{propMLC})
Let 
$$B = \{ \alpha \in \mathbb{R} : \exists c(\alpha) > 0 \mbox{ s.t. }||t_n\alpha|| > c(\alpha) \mbox{ for all }n\in \mathbb{N}  \}$$
and 
$$C  = \{ \alpha \in \mathbb{R} : \exists c(\alpha) > 0 \mbox{ s.t. } |q_n(\alpha)|_{\mathcal{D}} > c(\alpha) \mbox{ for all }n\in \mathbb{N}  \}.$$
Then it is easy to see that $A = B \cap C \cap {\bf Bad}$.\\

Since $\frac{t_n}{t_{n-1}} \geq 2$ for all $n \geq 2$ we have that $\{t_n\}$ is lacunary and so one can show, as mentioned by  Moshchevitin in \cite{Moshchevitin05}, that $B$ is $1/2$-winning. In addition, Schmidt showed in \cite{Schmidt66} that ${\bf Bad}$ is $1/2$-winning. Consequently our result will follow if we can show that $C$ is $1/2$-winning.\\

Let $\beta \in (0,1)$ be given. Set $R = \frac{2}{\beta} > 2$, and  $M = t_N$ where $N$ is sufficiently large so that $M >  R^{4k} + 2R^k $. In addition, define $V_s = \frac{1}{M}R^{(s+2)}$, $S_n = \{x \in \mathbb{R}: n\cdot||nx|| \leq 1\}$ and $T_s = \cup_{V_{s} \leq i < V_{s+1}}S_{iM}$. Notice that by our choice of $M$, $V_0 < 1$, and $V_s$ diverges. This implies that for each $i\geq 1$, $V_s \leq i< V_{s+1}$ for a unique $s$. In addition, we remark that the set $S_n$ is simply the disjoint union of all intervals of the form $[\frac{p}{n} - \frac{1}{n^2}, \frac{p}{n} + \frac{1}{n^2}]$.  \\

We now consider the following condition:\\

\setlength{\parindent}{0 pt}
(C1) ${\bf A_{2s}} \cap T_s = \emptyset$.
\setlength{\parindent}{0 pt}\\

Suppose that $A$ has a strategy that satisfies (C1) for every $s$. Then if $x \in \cap_{n = 0}^{\infty}{\bf A_{n}}$ one has that $x \notin T_s$ for all $s$. Consequently $x \notin S_{iM}$ for all $i \geq 1$, which implies that for each $i$ one has that $iM||iMx|| > 1$. The latter shows that $iM$ is not a denominator of a convergent for $x$. Since the latter is true for all $i$ we conclude that $M$ does not divide $q_n(x)$ for any $n \geq 1$. This implies that $|q_n(x)|_{\mathcal{D}} \geq \frac{1}{M}$ for all $n\in \mathbb{N}$ and hence $x \in C$. Thus if $A$ can play according to the above strategy we would obtain that $C$ is $(1/2, \beta)$-winning as desired.\\

\setlength{\parindent}{20 pt}
So let ${\bf A_0}$ be any subinterval of ${\bf B_0}$. Then $s = 0$ and by our choice of $M$ we know $\{i \in \mathbb{N}: V_0 \leq i < V_{1}\} = \emptyset$. Consequently $T_0$ is empty and ${\bf A_0}$ satisfies (C1). Next suppose that $s \geq 0$ and $A$ has chosen ${\bf A_{2s}}$, satisfying (C1). We wish to show that $A$ can play so that ${\bf A_{2(s+1)}}$ satisfies (C1).\\

\setlength{\parindent}{20 pt}
Let $\mathcal{E}_{s+1}$ be the collection of all intervals, intersecting ${\bf A_{2s}}$, in $S_{iM}$ for $i$ satisfying $V_{s+1} \leq i < V_{s+2}$. We make the following claim.\\

\setlength{\parindent}{0 pt}
{\bf Claim 1.} $A$ can play in such a way so that ${\bf A_{2s+1}}$, contains none of the intervals in $\mathcal{E}_{s+1}$.
\begin{proof} 
\setlength{\parindent}{20 pt}

If the collection $\mathcal{E}_{s+1}$ is empty there is nothing to prove. We thus may assume that $\mathcal{E}_{s+1}$ contains at least one interval $I$. The key idea in our approach is to show that all intervals in $\mathcal{E}_{s+1}$ have the same midpoint. \\

So suppose the latter is not true. Then one has two intervals, $I_1, I_2 \in \mathcal{E}_{s+1}$, given by $I_1 =  [\frac{p_1}{n} - \frac{1}{n^2}, \frac{p_1}{n} + \frac{1}{n^2}]$ and $I_1 =  [\frac{p_2}{m} - \frac{1}{m^2}, \frac{p_2}{m} + \frac{1}{m^2}]$, where $n = iM$ and $m = jM$ with $V_{s+1} \leq i,j < V_{s+2}$, and $\frac{p_1}{n} \ne \frac{p_2}{m}$. Then one has the inequality
$$| \frac{p_1}{n} - \frac{p_2}{m}| = \frac{|p_1m - p_2n|}{nm} \geq \frac{M}{nm}.$$
The last inequality follows from the fact that the numerator is non-zero, and divisible by $M$ as both $n$ and $m$ are.\\

Since $I_1, I_2 \in \mathcal{E}_{s+1}$, we have that there are $a \in I_1 \cap {\bf A_{2s}}$ and $b \in I_2 \cap {\bf A_{2s}}$. Since ${\bf A_{2s}}$ has length $\frac{1}{R^{2s}}$ we know that
$$|a - b| \leq \frac{1}{R^{2s}}.$$
Moreover, we have by the triangle inequality that
$$|a-b| + |a - \frac{p_1}{n}| +  |b - \frac{p_2}{m}|\geq |\frac{p_1}{n} - \frac{p_2}{m}| \geq \frac{M}{nm}$$
so that
$$|a-b| \geq \frac{M}{nm} - \frac{1}{m^2} - \frac{1}{n^2} \geq \frac{M}{M^2V_{s+2}^2} - \frac{2}{M^2V_{s+1}^2},$$
where the latter follows by $V_{s+2}> i,j \geq V_{s+1}$ and $n = Mi$, $m=Mj$. Combining the two inequalities above we get that 
$$\frac{1}{R^{2s}} \geq  \frac{M}{M^2V_{s+2}^2} - \frac{2}{M^2V_{s+1}^2} = \frac{M}{R^{2(s+4)}} - \frac{2}{2R^{(s+3)}} = \frac{M - 2R^2}{R^{2(s+4)}}.$$
The above implies 
$$(M - 2R^2) R^{2s} \leq R^{2(s+4)}  \hspace{2mm} \iff \hspace{2mm} (M - 2R^2)\leq R^{8},$$
which is a contradiction by our choice of $M > R^{8} + 2R^2$. This shows that indeed all intervals in $\mathcal{E}_{s+1}$ have the same midpoint.\\

So let us denote by $p/q$ the midpoint of an interval in $\mathcal{E}_{s+1}$. Let ${\bf B_{2s +1}} = [a,b]$. Then $A$ can choose an interval ${\bf A_{2s +1}} \subset {\bf B_{2s +1}}$, such that $p/q$ is not an interior point for ${\bf A_{2s +1}}$. The latter implies that ${\bf A_{2s +1}}$ contains no intervals from $\mathcal{E}_{s+1}$ and the claim is proved.
\end{proof}
Given Claim 1 above we know that ${\bf A_{2s+1}}$ and hence ${\bf B_{2s+2}}$ contain no intervals from $\mathcal{E}_{s+1}$. We now wish to show that $A$ can choose ${\bf A_{2s+2}}$ so that ${\bf A_{2s+2}}$ does not {\em intersect} any of the intervals in $\mathcal{E}_{s+1}$. To show the latter, suppose  ${\bf B_{2s+2}} = [a,b]$, then $A$ can choose $\big[ a+ \frac{b-a}{4}, b - \frac{b-a}{4}\big]$. We wish to show that this choice works.\\

So suppose that $I \in \mathcal{E}_{s+1}$ intersects ${\bf A_{2s+2}}$ at some point $t$. We know that $|I| = \frac{2}{n^2}$ for some  $MV_{s+1} \leq n < MV_{s+2}$. This shows that 
$$|I| \leq \frac{2}{R^{2(s + 3)}}.$$
Denote the right hand-side of the above by $\Delta$. Then we have, from $t \in I$, that $I \subset [t - \Delta, t+ \Delta]$. On the other hand, we have from $t \in {\bf A_{2s+2}}$ that $[t - \frac{b-a}{4}, t + \frac{b-a}{4}] \subset [a,b]$. Finally, notice that
$$\frac{|b-a|}{4} = \frac{1}{2}|{\bf A_{2s+2}}| = \frac{1}{2}\frac{1}{R^{2(s+1)}}.$$ 
We have 
$$\frac{1}{2}\frac{1}{R^{2(s+1)}} \geq \frac{2}{R^{2(s + 3)}} \hspace{2mm} \iff \hspace{2mm}  R^{4} > 4,$$
where the latter is true as $R > 2$. Consequently, $\frac{|b-a|}{4} \geq \Delta$, and thus $I \subset [t - \Delta, t+ \Delta] \subset [t - \frac{|b-a|}{4}, t + \frac{|b-a|}{4}] \subset [a,b] \subset {\bf A_{2s+1}}$. The latter is a contradiction, since by construction ${\bf A_{2s+1}}$ did not contain any of the intervals in $\mathcal{E}_{s+1}$. We thus conclude that ${\bf A_{2s+2}}$ does not intersect any of the intervals in $\mathcal{E}_{s+1}$, which shows $ {\bf A_{2s+2}} \cap T_{s+1} = \emptyset$. The result now follows by induction.\\

Since $\beta \in (0,1)$ was arbitrary, we conclude that $C$ is $1/2$-winning and the Proposition is proved.

\end{proof}

\end{document}